\def \1{\mathds{1}}
\def \al{\alpha}
\def \bs{\backslash}
\def \C{{\mathbb C}}
\def \CB{{\cal B}}
\def \CE{{\cal E}}
\def \CP{{\cal P}}
\def \CT{{\cal T}}
\def \det{\operatorname{det}}
\def \df{\ \begin{array}{c} _{\rm def}\\ ^{\displaystyle =}\end{array}\ }
\def \ev{\mathrm{ev}}
\def \Ga{\Gamma}
\def \GL{\operatorname{GL}}
\def \ga{\gamma}
\def \Id{{\rm Id}}
\def \la{\lambda}
\def \N{{\mathbb N}}
\def \odd{\mathrm{odd}}
\def \OE{\mathrm{OE}}
\def \ol{\overline}
\def \SL{\operatorname{SL}}
\def \tr{\operatorname{tr}}
\def \val{\operatorname{val}}
\def \Z{{\mathbb Z}}
\def \({\left(}
\def \){\right)}
\def \[{\left[}
\def \]{\right]}
\newcommand{\e}
[1]{\emph{#1}\index{#1}}
\newcommand{\mat}
[4]{\(\begin{matrix}#1 & #2 \\ #3 & #4\end{matrix}\)}
\newcommand{\norm}
[1]{\left|\hspace{-1pt}\left|#1\right|\hspace{-1pt}\right|}
\renewcommand{\sp}
[1]{\left\langle #1\right\rangle}
\newcommand{\stack}
[2]{\genfrac{}{}{0pt}{1}{#1}{#2}}
\newcommand{\tto}
[1]{\stackrel{#1}{\longrightarrow}}
\newtheorem{theorem}{Theorem}[section]
\newtheorem{lemma}[theorem]{Lemma}
\newtheorem{proposition}[theorem]{Proposition}
\newtheorem{exmple}[theorem]{Example}
\newenvironment{example}[0]{\begin{exmple}\rm}
{\end{exmple}}
\newtheorem{defi}[theorem]{Definition}
\newenvironment{definition}[0]{\begin{defi}\rm}
{\end{defi}}
\newtheorem{remrk}[theorem]{Remark}
\newtheorem{exmples}[theorem]{Examples}
\newenvironment{examples}[0]{\begin{exmples}{\ }\\ 	\vspace{-20pt}\nopagebreak[4]
	\begin{itemize}\rm}{\end{itemize}\end{exmples}\vspace{5pt}}
\begin{document}

\pagestyle{myheadings} \markright{IHARA ZETA ON WEIGHTED GRAPHS}

\title{Ihara Zeta functions of infinite weighted graphs\\ \ \\ \small
SIAM J. discrete Math. Vol. 29, No. 4, 2100-2116 (2015)}
\author{Anton Deitmar\thanks{This research was funded by the DFG grant DE 436/10-1}}
\date{}
\maketitle

{\bf Abstract:}
The theory of Ihara zeta functions is extended to infinite graphs which are weighted and of finite total weight.
In this case one gets meromorphic instead of rational functions and the classical determinant formulas of Bass and Ihara hold true with Fredholm determinants.

$$ $$

\tableofcontents

\newpage
\section*{Introduction}

The Ihara zeta function, introduced by Yasutaka Ihara in the 1960s \cites{Ihara1,Ihara2} 
is a zeta function counting prime elements in discrete subgroups of rank one $p$-adic groups.
It can be interpreted as a geometric zeta function for the corresponding finite graph, which is a quotient of the Bruhat-Tits building attached to the $p$-adic group \cite{Serre}.
Over time it has been generalized in stages by Sunada, Hashimoto and Bass \cites{Sun1,Sun2,Hash0,Hash1,Hash2,Hash3,Hash4,Bass,Sunada}.
Comparisons with number theory can be found in the papers of Stark and Terras \cites{ST1,ST2,ST3}.
This zeta function is defined as the product
$$
Z(u)=\prod_p(1-u^{l(p)})^{-1},
$$
where $p$ runs through the set of prime cycles in a finite graph $X$. The product, being infinite in general, converges to a rational function, actually the inverse of a polynomial, and satisfies the famous \e{Ihara determinant formula}
$$
Z(u)^{-1}=\det(1-uA+u^2Q)(1-u^2)^{-\chi},
$$
where $A$ is the adjacency operator of the graph, $Q+1$ is the valency operator and $\chi$ is the Euler number of the graph.
One of the most remarkable features of the Ihara formula is, that in the case of $X=\Ga\bs Y$, where $Y$ is the Bruhat-Tits building of a $p$-adic group $G$ and $\Ga$ is a cocompact arithmetic subgroup of $G$ of split rank 1, then the right hand side of the Ihara formula equals the non-trivial part of the Hasse-Weil zeta function of the Shimura curve attached to $\Ga$, thus establishing the only known link between geometric and arithmetic zeta- or L-functions.

In recent years, several authors have asked for a generalization of these zeta functions to infinite graphs.
The paper \cite{Scheja} considers the arithmetic situation, where the graph is the union of a compact part and finitely many cusps. The zeta function is defined by plainly ignoring the cusps, so indeed, it is a zeta function of a finite graph.
In \cite{Clair1} and \cite{Clair2}, the zeta function of a finite graph is generalized to an $L^2$-zeta function where a finite trace on a group von-Neumann algebra is used to define a determinant.
In \cite{Grig}, an infinite graph is approximated by finite ones and the zeta function is defined as a suitable limit.
In \cites{Guido1,Guido2} a relative version of the zeta function is considered on an infinite graph which is acted upon by a group with finite quotient.
In \cite{Chinta}, finally, the idea of the Ihara zeta function is extended to infinite graphs by counting not all cycles, but only those which pass through a given point.

In this paper, infinite weighted graphs of finite total weight are considered.
It is shown that the Euler product of the zeta function converges to a meromorphic function without zeros, i.e., the reciprocal of an entire function.
Different known proofs of the Ihara formula in the finite case yield, when appropriately transferred to the infinite weighted case, indeed different results, here labelled as the Ihara-Sunada and the Bass-Ihara formula.
It is shown that twisting with local systems yields corresponding L-functions with similar properties.
In the case of tree lattices \cite{BassLub}, it may happen, that geodesics are reversed and so partial backtracking is allowed in the quotient graph.
We generalize this to arbitrary partial backtracking and find that the results persist in that case, too.

\section{Weighted graphs}\label{appB}

\begin{definition}
Let $X$ be a connected graph.
So $X$ consists of a vertex set $VX=V(X)$ and a set $EX=E(X)\subset \CP(VX)$, whose elements are subsets of $VX$ of order two, called \e{edges} of the graph.
Two vertices $x,y$ are called \e{adjacent} if $\{x,y\}$ is an edge.
For a vertex $x$ the \e{valency} $\val(x)$ is the number of edges having $x$ as an endpoint.
It can be infinite, and the graph is called \e{locally finite} if the valency is finite for every vertex.

\begin{center}
{\it In this paper we will only consider graphs of \e{bounded valency}, i.e. graphs $X$ for which there exists a constant $M>0$ such that $\val(x)\le M$ holds for ever vertex $x$.
}
\end{center}

A \e{path} in $X$ is a sequence of vertices $p=(x_0,x_1,\dots,x_n)$ such that for each $0\le j\le n-1$ the vertices $x_j$ and $x_{j+1}$ are adjacent.
The path is \e{closed} if $x_n=x_0$.
In that case we define the \e{shifted path} $\tau(p)$ as $(x_1,x_2,\dots,x_n,x_1)$.
The path $p$ is said  to be \e{reduced} or have \e{no backtracking}, if $x_{j-1}\ne x_{j+1}$ for every $1\le n\le n-1$.
A closed path $p=(x_0,\dots,x_n)$ is said to have a \e{tail}, if $x_1=x_{n-1}$.
A reduced path without tail is called a \e{regular}  path.
On the set of closed paths, we consider the equivalence relation generated by $p\sim\tau(p)$.
A \e{cycle} is an equivalence class of  paths and a \e{regular cycle} is a cycle consisting of regular paths only.
It follows that a path in a regular cycle has no tail.
A cycle is called \e{prime}, if it is not a power of a shorter one.
\end{definition}

\begin{definition}
Let $\OE=\OE(X)$ denote the set of all oriented edges of $X$.
So each edge in $EX$ give rise to two elements of $\OE$.
For an oriented edge $e$ we write $e^{-1}$ for its reverse.
\end{definition}

\begin{definition}
Let $w:\OE(X)\to (0,\infty)$ be a function, called the \e{weight function}.
The \e{total weight} of the graph $X$ is defined to be
$$
w(X)=\sum_{e\in \OE}w(e),
$$
and we will assume that it is finite, i.e.,
$$
w(X)<\infty.
$$
Note that if $OE(X)$ is uncountable (and the sum is interpreted as an integral with respect to the counting measure), then this finiteness condition implies that only countably many edges $e$ have non-zero weight.
As edges of weight zero do not contribute to what follows, we might as well remove them and assume that $OE(X)$ is countable.
If $e\in EX$ and $e_1,e_2\in \OE$ are the two possible orientations of $e$, then we write
$$
W(e)=w(e_1)w(e_2).
$$
For each path $p=(x_0,\dots,x_n)$ let
$$
w(p)=w(x_0,x_1)\cdots w(x_{n-1},x_n)
$$
be the weight of the path.
We then have $w(c^j)=w(c)^j$ for $j\in\N$ and the weight does not change under equivalence.
The length of a cycle $c$ is the number of edges and will be denoted by $l(c)$. We have $l(c^j)=jl(c)$.
\end{definition}

\begin{definition}
We define the \e{Ihara zeta function} of the weighted graph $X$ as the infinite product
$$
Z(u)=Z_X(u)=\prod_p\(1-w(p)u^{l(p)}\)^{-1},
$$
the product being extended over all regular prime cycles in $X$.
\end{definition}

\begin{definition}
Let $\ell^2(\OE)$ be the $\ell^2$-space on the set $\OE$, which we write as the set of all formal linear combinations $\sum_{e\in \OE}c_e e$ with
$
\sum_{e\in \OE}|c_e|^2<\infty.
$
On $\ell^2(\OE)$ define a linear operator $T$ by
$$
Te\df \sum_{e'} w(e')e',
$$
where the sum runs over all oriented edges $e'$ such that the origin vertex $o(e')$ of $e'$ equals
the target vertex $t(e)$ of $e$, and $e'\ne e^{-1}$.
\end{definition}

Recall that for a trace class operator $T$ on a Hilbert space the \e{Fredholm determinant} is defined by
$$
\det(1-T)=\sum_{k=0}^\infty (-1)^k\tr\bigwedge^k(T).
$$
Then $\det(1-S)(1-T)=\det(1-S)\det(1-T)$ and the function
$$
u\mapsto \det(1-uT)
$$
is an entire function with zeros at $1/\la$, where $\la$ is an eigenvalue of $T$.
For small values of $u$, the value $\det(1-uT)$ is close to one and it satisfies 
$$
\log\det(1-uT)=\tr\log(1-uT)=-\sum_{n=1}^\infty\frac{u^n\tr T^n}n.
$$
For this see \cite{Simon}.

\begin{theorem}\label{thm1.1}
The operator $T$ is of trace class.
The infinite product $Z(u)$ converges for $|u|$ sufficiently small. The limit function extends to a meromorphic function on $\C$ without zeros, more precisely, the function $Z(u)^{-1}$ is entire and satisfies
$$
Z(u)^{-1}=\det(1-uT).
$$
\end{theorem}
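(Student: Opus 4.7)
The plan is to deduce all three assertions from a single trace-class factorization of $T$ combined with a direct combinatorial identity for $\tr T^n$.

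\textbf{Step 1: $T$ is trace class.} I would write $T=WA$, where $W$ is the diagonal operator on $\ell^2(\OE)$ with $We=w(e)e$ and $A$ is the $\{0,1\}$-valued non-backtracking adjacency $Ae=\sum_{e'}e'$, with the same index set as in the definition of $T$. The finite-total-weight hypothesis $\sum_e w(e)=w(X)<\infty$ together with $W\ge 0$ gives $\|W\|_1=w(X)$, so $W$ is trace class. The bounded-valency hypothesis means each row and each column of $A$ has at most $M-1$ ones, hence $\|A\|\le M$ by the Schur test. Consequently $\|T\|_1\le\|W\|_1\|A\|\le Mw(X)<\infty$.

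\textbf{Step 2: the trace identity.} Iterating the definition gives $T^ne_0=\sum w(e_1)\cdots w(e_n)e_n$, summed over non-backtracking walks $(e_0,e_1,\dots,e_n)$. Taking the diagonal enforces $e_n=e_0$ and $e_{n-1}\ne e_0^{-1}$, i.e., tail-freeness, so
\[
\tr T^n=\sum w(e_0)w(e_1)\cdots w(e_{n-1}),
\]
summed over closed non-backtracking tail-free walks of length $n$. Such rooted walks correspond to cycles with a chosen cyclic shift of the edge sequence. For a cycle $c=p^k$ with $p$ the unique prime root of $c$, shifting by $l(p)$ preserves the edge sequence, so the number of distinct shifts is exactly $l(p)$, and each contributes weight $w(c)=w(p)^k$. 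Hence
\[
\tr T^n=\sum_{c:\,l(c)=n}l(p_c)\,w(c),
\]
where $p_c$ denotes the prime root of $c$.

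\textbf{Step 3: matching and meromorphic extension.} Expanding the Euler product logarithmically and reindexing by cycles $c=p^k$ gives
\begin{align*}
\log Z(u) &= \sum_p\sum_{k\ge 1}\frac{w(p)^k u^{kl(p)}}{k} = \sum_{n\ge 1}\frac{u^n}{n}\sum_{c:\,l(c)=n}l(p_c)\,w(c) \\
&= \sum_{n\ge 1}\frac{u^n}{n}\tr T^n = -\log\det(1-uT),
\end{align*}
the last equality by the Fredholm formula recalled in the excerpt. The estimate $|\tr T^n|\le\|T\|_1\|T\|^{n-1}$ makes both series converge absolutely for $|u|<\|T\|^{-1}$, so the Euler product converges there to $\det(1-uT)^{-1}$. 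Since $T$ is trace class, $\det(1-uT)$ is entire, whence $Z(u)$ extends to a zero-free meromorphic function on all of $\C$.

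\textbf{Main obstacle.} The only non-formal step is the bookkeeping in Step 2: identifying closed non-backtracking tail-free walks with rooted cycles and correctly handling non-prime powers. A walk representing $c=p^k$ is determined not by an arbitrary starting edge but only by a starting edge modulo $l(p)$, which gives the key factor $l(p_c)$, rather than $n$ or $1$, in the trace formula. This factor combines with the $1/k$ from the geometric expansion of $-\log(1-w(p)u^{l(p)})$ and the $1/n$ from the Fredholm expansion to produce the clean identity with $\log\det(1-uT)$.
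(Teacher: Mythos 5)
Your proposal is correct and follows essentially the same route as the paper: the key identity $\tr T^n=\sum_{l(c)=n}l(c_0)\,w(c)$ for closed tail-free non-backtracking walks, followed by the $\exp$-$\log$ manipulation matching the Euler product with the Fredholm determinant. The only cosmetic difference is in establishing trace class, where you factor $T=WA$ with $W$ diagonal trace class and $A$ bounded via the Schur test, while the paper bounds $\sum_e\norm{Te}\le M\sum_e w(e)$ directly; both yield the same estimate $\|T\|_1\le Mw(X)$.
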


In the case of a finite graph with weight one this goes back to an idea of Hashimoto \cite{Hash1}, which later was refined by Bass \cite{Bass}.

\begin{proof}
We show that the operator $T$ is of trace class, and for every $n\in\N$ we have
$$
\tr T^n= \sum_{l(c)=n} l(c_0)\,w(c),
$$
where the sum runs over all  regular cycles $c$ of length $n$ and $c_0$ is the underlying prime to $c$.

For this we consider the natural orthonormal basis of $\ell^2(\OE)$  given by $(e)_{e\in \OE}$.
Using this orthonormal basis, one sees that the trace of $T^n$ is as claimed, once we know that $T$ is of trace class.
Let $M$ be an upper bound for the valency of the graph $X$, then one has
$$
\sum_e\norm{Te}\le M\sum_ew(e)<\infty
$$
and hence $T$ is of trace class.
For small values of $u$ we have
\begin{align*}
\det(1-uT) &=\exp\left( -\sum_{n=1}^\infty\frac{u^n}n\tr T^n\right)\\
&=\exp\left( -\sum_{n=1}^\infty\frac{u^n}n\sum_{l(c)=n}l(c_0)\,w(c)\right)\\
&=\exp\left( -\sum_{c_0}\sum_{m=1}^\infty \frac{u^{ml(c_0)}}m \,w(c_0)^m\right)\\
&= \prod_{c_0}\left( 1-w(c_0)u^{l(c_0)}\right)= Z(u)^{-1}.
\end{align*} 
This a fortiori also proves the convergence of the product.
\end{proof}

\begin{examples}
\item Interesting examples arise from naturally arising weight functions.
The first is an infinite quotient graph by a tree lattice $\Ga$ of finite covolume as in \cite{BassLub}.
Here a natural weight for an edge $e$ is given by $1/|\Ga_e|$, where $\Ga_e$ is the stabilizer group of $e$.
There will be more on this example in Section \ref{partbacktrack}.
\item Another class of natural examples is Cayley graphs: let $G$ be a group finitely generated by a subset $S$ and let $X$ be its Cayley-graph.
The nodes of $X$ are the elements of $G$ and for any $s\in S$ the node $g$ is connected to the node $gs$.
If we assume that $1\notin S$ and $S=S^{-1}$, then the oriented edges of $X$ are in natural bijection with $G\times S$.
Let
$l(g)$ be the word-length for $g\in G$.
If the sum $\sum_{g\in G}\frac1{(l(g)+1)^k}$  converges for some $k\in\N$, we can choose the weight $w(g,s)=\frac1{(l(g)+1)^k}$. Already in the simplest cases like $G=\Z^d$, the spectrum of the operator $T$ turns out to be quite hard to calculate and we reserve this for subsequent papers.
In the unweighted case the spectrum has in certain examples been calculated in \cite{Angel}.  
\end{examples}

\section{The Ihara-Sunada formula}\label{sec2}
For $m\in\N$ let
$$
N_m=\sum_{\substack{ l(p)=m\\
\text{regular, closed}\\
\text{no tail}}}w(p)
$$
where the sum runs over all regular closed paths without tail.

\begin{lemma}\label{lem2.1}
For the zeta function $Z(u)$ we have
$$
u\frac{Z'}Z(u)=\sum_{m=1}^\infty N_mu^m.
$$
\end{lemma}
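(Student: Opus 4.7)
The plan is to take the logarithmic derivative of the Euler product defining $Z(u)$ and then convert a sum over prime cycles into the sum over all regular closed paths without tail. Starting from
$$
\log Z(u) = -\sum_p \log\!\left(1-w(p)u^{l(p)}\right) = \sum_p\sum_{k=1}^\infty \frac{w(p)^k u^{kl(p)}}{k},
$$
(valid for $|u|$ small by Theorem~\ref{thm1.1}), I differentiate termwise to get
$$
u\frac{Z'}{Z}(u)=\sum_p\sum_{k=1}^\infty l(p)\, w(p)^k u^{kl(p)}.
$$
Since $w(p)^k = w(p^k)$ and every cycle $c$ is uniquely a power $c_0^k$ of a prime cycle $c_0$ with $l(c_0)=l(p)$, this rearranges to
$$
u\frac{Z'}{Z}(u)=\sum_{c}l(c_0)\,w(c)\,u^{l(c)}=\sum_{m=1}^\infty u^m\!\!\sum_{l(c)=m}\!l(c_0)\,w(c),
$$
where the inner sum runs over all cycles of length $m$.

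It remains to identify the inner sum with $N_m$. For this I would argue that the $\tau$-orbit of any regular closed path without tail of length $m$ has exactly $l(c_0)$ elements, where $c_0$ is the underlying prime cycle. Indeed, if $p=c_0^k$ as a sequence of edges, then $\tau^{l(c_0)}(p)=p$ and no smaller shift fixes $p$ (otherwise $c_0$ would not be prime). Since the weight $w$ and the properties regular/no-tail are $\tau$-invariant, summing $w(p)$ over all regular closed paths without tail of length $m$ is the same as summing $l(c_0)\,w(c)$ over all cycles of length $m$:
$$
N_m=\sum_{c,\,l(c)=m} l(c_0)\,w(c).
$$
Combining this with the previous display gives the claimed identity.

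The only delicate point is the orbit-size count, which relies on the definition that a cycle is an equivalence class consisting of regular paths only — together with the remark that such paths automatically have no tail. This guarantees that the $\tau$-equivalence classes of regular closed paths without tail are exactly the cycles, so no path is over- or undercounted. The rest is a routine reindexing of the logarithmic-derivative series, and termwise differentiation is justified on the disc of convergence of the Euler product, which is nonempty by Theorem~\ref{thm1.1}.
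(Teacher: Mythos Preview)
Your proof is correct and follows essentially the same route as the paper's: take the logarithm of the Euler product, differentiate, reindex the double sum over primes and powers as a single sum over cycles weighted by $l(c_0)$, and then convert this to a sum over regular closed paths without tail by observing that each cycle corresponds to exactly $l(c_0)$ such paths. Your write-up is in fact more explicit than the paper's on the orbit-size count, but the underlying argument is identical.
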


\begin{proof}
Taking the logarithm, we get
$$
\log Z(u)=\sum_{c_0}\sum_{n=1}^\infty \frac{w(c_0)^n}nu^{nl(c_0)}
=\sum_c\frac{w(c)u^{l(c)}}{l(c)}l(c_0),
$$
where in the first identity, $c_0$ runs through all regular prime   cycles and in the second, $c$ runs through all   regular cycles where $c_0$ denotes the prime underlying $c$.
We take the derivative and multiply by $u$ to get
$$
u\frac{Z'}Z(u)=\sum_cw(c)u^{l(c)}l(c_0).
$$
Replacing the sum over all regular cycles with a sum over all regular closed paths without tails, the factor $l(c_0)$ drops and we get the claim of the lemma.
\end{proof}

We form the $\ell^2$-space $\ell^2(VX)$ of vertices on which we consider the \e{adjacency operator} $A_1$ defined by
$$
A_1(x) =\sum_{x'}w(x,x')x',
$$
where the sum runs over all vertices $x'$ adjacent to $x$.
As the weight of $X$ is finite, the adjacency operator is a bounded operator.
For $m\in\N$ we set
$$
A_m(x)=\sum_{l(p)=m}w(p)x_p,
$$
where the sum extends over all regular paths $p$ of length $m$, starting in $x$, where $x_p$ denotes the endpoint of the path $p$.
 We finally put $A_0=\Id$, the identity operator.

Further set $B_0=\Id$ and let
\begin{align*}
B_{2n}x&=\sum_{x'} W(x,x')^{n}x,&n\ge 1,\\
B_{2n+1}x&=\sum_{x'}W(x,x')^{n}w(x,x')x',&n\ge 0,
\end{align*}
where the sums run over all neighbors $x'$ of $x$.
Note that $B_1=A_1$.

\begin{lemma}\label{lem2.2}
For every $m\in\N$ we have
$$
\sum_{j=0}^m(-1)^jA_{m-j}B_j=0.
$$
This is equivalent to the identity of operator-valued formal power series:
$$
A(u)B(-u)=1,
$$
where $A(u)=\sum_{j=0}^\infty u^jA_j$ and $B(u)=\sum_{j=0}^\infty u^jB_j$.
\end{lemma}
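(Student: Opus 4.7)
The plan is to interpret each term $(A_{m-j} B_j)(x)$ combinatorially as a weighted sum over length-$m$ paths starting at $x$ that ``oscillate first, then walk regularly'', and then to exhibit a sign-reversing pairing on such paths that kills the alternating sum.

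First I unpack $B_j$ geometrically. For $j=2k\ge 2$, the operator $B_{2k}$ represents $k$ round trips along an edge $\{x,x'\}$ that start and end at $x$, summed over neighbours $x'$ with weight $W(x,x')^k$; for $j=2k+1$, $B_{2k+1}$ represents $k$ round trips followed by one more step from $x$ to $x'$, ending at $x'$ with weight $W(x,x')^k w(x,x')$. Applying $A_{m-j}$ then extends by a reduced path of length $m-j$ from the current endpoint. Hence
$$
(A_{m-j}B_j)(x)=\sum_P w(P)\,y_m,
$$
where $P=(y_0,y_1,\dots,y_m)$ with $y_0=x$ ranges over paths such that (i) the first $j$ edges oscillate along the edge $\{y_0,y_1\}$, i.e.\ $y_i=y_{i\bmod 2}$ for $0\le i\le j$, and (ii) the sub-path $(y_j,y_{j+1},\dots,y_m)$ is reduced. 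The weight $w(P)$ is the product of edge weights and is the same no matter which decomposition index~$j$ is used.

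Next, for a fixed path $P$, I classify the indices $j$ that admit such a decomposition. Let $N=N(P)\in\{1,\dots,m\}$ be the maximal length of an initial oscillation of $P$ along $\{y_0,y_1\}$. Since $y_{i-1}=y_{i+1}$ for every $1\le i\le N-1$, demanding $(y_j,\dots,y_m)$ to be reduced forces $j\ge N-1$, while~(i) forces $j\le N$. By maximality of $N$, when $N<m$ we have $y_{N+1}\ne y_{N-1}$, so position $i=N$ is not a backtracking position of $P$; hence both $j=N-1$ and $j=N$ satisfy~(ii) at this boundary. The remaining backtracking conditions in both cases reduce to the single statement that the tail $(y_{N},y_{N+1},\dots,y_m)$ is reduced. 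Consequently $P$ admits a $j$-decomposition for exactly the two values $j\in\{N-1,N\}$ when its tail is reduced, and for no value of $j$ otherwise.

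The two surviving decompositions carry opposite signs $(-1)^{N-1}$ and $(-1)^{N}$ but contribute the same weighted endpoint $w(P)\,y_m$, so they cancel. Summing over all admissible $P$ yields $\sum_{j=0}^m (-1)^j A_{m-j} B_j = 0$. The equivalence with $A(u)B(-u)=1$ is then formal: the coefficient of $u^m$ in $A(u)B(-u)$ equals $\sum_{j=0}^m(-1)^jA_{m-j}B_j$, which equals $\Id$ at $m=0$ (since $A_0=B_0=\Id$) and vanishes for $m\ge 1$ by what has just been shown. The delicate point is the classification step: checking that position $N(P)$ itself is not a backtracking position of $P$ and that no further oscillation-induced backtracking occurs beyond $N(P)$, so that the admissible index set is exactly $\{N-1,N\}$. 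The boundary cases $N=m$ (tail is a single vertex, trivially reduced) and $m=1$ (only $j\in\{0,1\}$ occur) should be verified directly to confirm that the pairing is well-defined and non-degenerate.
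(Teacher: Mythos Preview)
Your argument is correct and is essentially the same as the paper's: both interpret $A_{m-j}B_j$ as a weighted count of length-$m$ paths that oscillate for $j$ steps along a single edge and then proceed without backtracking, and both observe that each such path contributes at exactly two consecutive values of $j$ with opposite signs. You phrase this as a sign-reversing pairing via the invariant $N(P)$, while the paper phrases it as a telescoping identity $A_{m-1}B_1-A_m=A_{m-2}B_2-A_{m-3}B_3+\dots$, but the combinatorial content is identical.
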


\begin{proof}
One best thinks of the operator $A$ as sending potentials from a vertex $v$ to all its neighbors.
Accordingly, $A_m$ sends potentials along all reduced paths.
Finally, $B_{2n}$ may be viewed as sending potentials to all neighbors and then sending them back, where this process is repeated $n$-times.
The operator $A_{m-1}B_1$ sends potentials first to all neighbors, then in all directions along reduced paths.
Hence
$$
A_{m-1}B_1x=\sum_pw(p)x_p,
$$
where the sum runs over all paths $p$, starting at $x$, which can have backtracking at the first step, but not later.
Hence $A_{m-1}B_1-A_m$ sees only paths which have backtracking at the first step. So this operator coincides with $A_{m-2}B_2$ except that the latter also sees paths which also have  backtracking at the second step.
These, again, are taken care of by $A_{m-3}B_3$ up to  backtracking at the third step and so on. We end up with
$$
A_{m-1}B_1-A_m=A_{m-2}B_2-A_{m-3}B_3+\dots\pm B_m
$$
which is equivalent to $A(u)B(-u)=1$.
\end{proof}

Let $Q$ be the operator defined by $Qx=q(x)x$, where the number of neighbors of the vertex $x$ is $q(x)+1$.
In other words, $Q+1$ is the valency operator.

\begin{lemma}\label{lem2.3}
Let $B_\ev(u)=\sum_{\stack{j\ge 0}{\text{even}}}u^jB_j$ and $B_\odd(u)=B(u)-B_\ev(u)$.
For any vertex $x$ we have
$$
B_\ev(u)x=x+\sum_{x'}\(\frac{u^2W(x,x')}{1-u^2W(x,x')}\)x
$$
and
$$
B_\odd(u)x=\sum_{x'}\(\frac{u\,w(x,x')}{1-u^2W(x,x')}\)x'
$$
\end{lemma}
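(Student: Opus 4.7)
The plan is to prove this by direct computation: substitute the definitions of $B_{2n}$ and $B_{2n+1}$ into the power series defining $B_\ev(u)$ and $B_\odd(u)$, swap the order of summation to pull the sum over neighbors outside, and recognise what remains as a geometric series in the variable $u^2 W(x,x')$.

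For the even part, first I would separate off the $n=0$ term (which contributes the summand $x$). For $n\geq 1$ the definition gives $B_{2n}x = \bigl(\sum_{x'} W(x,x')^n\bigr)x$, so
\begin{align*}
B_\ev(u)x = x + \sum_{n=1}^\infty u^{2n}\sum_{x'}W(x,x')^n\,x
          = x + \sum_{x'}\Bigl(\sum_{n=1}^\infty (u^2W(x,x'))^n\Bigr)x,
\end{align*}
and the inner geometric sum equals $u^2W(x,x')/(1-u^2W(x,x'))$, giving the first formula.

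For the odd part the computation is completely analogous: starting from $B_{2n+1}x = \sum_{x'}W(x,x')^n w(x,x')\,x'$, I would interchange the sums in $B_\odd(u)x = \sum_{n\ge 0}u^{2n+1}B_{2n+1}x$ to obtain
\begin{align*}
B_\odd(u)x = \sum_{x'} u\,w(x,x')\Bigl(\sum_{n=0}^\infty (u^2W(x,x'))^n\Bigr)x',
\end{align*}
and summing the geometric series yields the second formula.

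There is no hard step; the only thing to check is that the interchange of summations is legitimate. This is immediate for $|u|$ sufficiently small: by bounded valency, say $\val(x)\le M$, the sum over neighbors is finite (at most $M$ terms) and each geometric series converges absolutely as soon as $u^2 W(x,x')<1$, which holds uniformly for small $u$ since the weights are bounded (the total weight is finite, and in particular each $W(x,x')$ is bounded). Since both sides are operator-valued holomorphic functions of $u$ near $0$, the identities then extend to the full domain of convergence.
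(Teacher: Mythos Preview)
Your proof is correct and is exactly the direct computation the paper has in mind; the paper's own proof consists of the single sentence ``A calculation.'' There is nothing to add.
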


\begin{proof}
A calculation.
\end{proof}

\begin{example}
In the special case of a finite graph with constant weight $w=1$ we get
$$
B(u)=\frac1{1-u^2}(1+uA_1+u^2Q),
$$
and so
$$
A(u)=(1-u^2)(1+uA_1+u^2Q)^{-1}.
$$
\end{example}

Back to the general case, 
for $m,n\ge 1$ define the operator $C_{m,n}$ by 
$$
C_{m,n}x=\sum_{\substack{l(p)=m\\ \text{regular}\\
x_0=x}}W(x_0,x_1)^{n}w(p)x_m,
$$
where the sum runs over all paths without of length $m$, starting at $x$. Further, $x_0$ and $x_1$ are the first two vertices of the path $p$ and $x_m$ is the last.

\begin{lemma}\label{lem2.4}
$N_0=N_1=N_2=0$, and for $m\ge 3$ we have
$$
N_m=\tr A_m-\sum_{j=1}^{\lfloor\frac {m-1}2\rfloor}\tr A_{m-2j}B_{2j}+2\sum_{j=1}^{\lfloor\frac {m-1}2\rfloor}\tr C_{m-2j,j}.
$$
\end{lemma}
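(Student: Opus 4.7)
\emph{Base cases and objective.} The cases $m \le 2$ will follow immediately: $N_0 = 0$ by convention, $N_1 = 0$ because the graph has no loops, and $N_2 = 0$ because a regular closed path of length $2$ would require $x_0 = x_2$ and $x_0 \ne x_2$ simultaneously. For $m \ge 3$, I would set $T := \tr A_m - N_m$, the weighted count of regular closed paths of length $m$ that do have a tail. My plan is to prove
\[
T = \sum_{j=1}^{\lfloor\frac{m-1}{2}\rfloor}\bigl(\tr(A_{m-2j}B_{2j}) - 2\tr C_{m-2j,j}\bigr),
\]
which is equivalent to the lemma, by a nested inclusion-exclusion that iteratively peels off tails.

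\emph{Main decomposition.} First I would observe that any regular closed path $q$ of length $m$ with a tail is uniquely of the form $q = (u, v, p_1, \dots, p_{m-3}, v, u)$, where $v = q_1$, $u = q_0$, $u \sim v$, and $p = (v, p_1, \dots, p_{m-3}, v)$ is a regular closed path at $v$ of length $m-2$; moreover $w(q) = W(u,v)w(p)$, and the regularity of $q$ at positions $1$ and $m-1$ is exactly the pair of side conditions $p_1 \ne u$ and $p_{m-3} \ne u$. Dropping these side conditions, the triple sum $\sum_{v,\,u \sim v,\,p} W(u,v)w(p)$ equals $\tr(A_{m-2}B_2)$, because $B_2$ acts on $v$ by the scalar $\sum_{u \sim v} W(u,v)$. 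Inclusion-exclusion on the two side conditions then yields
\[
T = \tr(A_{m-2}B_2) - S_1 - S_2 + D,
\]
with $S_1, S_2, D$ the sums under the single conditions $p_1 = u$, $p_{m-3} = u$ and their intersection, respectively. Substituting $u = p_1$ identifies $S_1 = \tr C_{m-2,1}$ straight from the definition. For $S_2$, I would apply the cyclic shift $(v, p) \mapsto (p_{m-3}, p^\sigma)$, where $p^\sigma$ is the rotation of the closed walk $p$ starting at $p_{m-3}$: this is a weight-preserving bijection on the indexing set (the same directed edges are traversed in the same directions, just cyclically permuted), and it sends $W(p_{m-3}, v)$ to $W((p^\sigma)_0, (p^\sigma)_1)$, giving $S_2 = \tr C_{m-2,1}$ as well. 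The symmetry $W(x,y) = W(y,x)$ will be essential here.

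\emph{Recursion and termination.} The double-overlap $D$ forces $p_1 = p_{m-3} = u$, which is exactly to say that $p$ is itself a regular closed path at $v$ with a tail whose tail vertex coincides with $u$. Peeling $p$'s tail yields $P = (u, P_1, \dots, P_{m-5}, u)$, a regular closed path at $u$ of length $m-4$, with $w(p) = W(u,v)w(P)$ and analogous side conditions $P_1 \ne v$, $P_{m-5} \ne v$. Thus $D$ has precisely the same shape as $T$, but with $(A_{m-2} B_2, C_{m-2,1})$ replaced by $(A_{m-4} B_4, C_{m-4,2})$ and the roles of $u$ and $v$ interchanged. Iterating the same inclusion-exclusion will give
\[
D^{(j-1)} = \tr(A_{m-2j}B_{2j}) - 2\tr C_{m-2j,j} + D^{(j)},
\]
with $D^{(0)} := T$, $D^{(1)} := D$. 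The recursion must terminate at $j = \lfloor\frac{m-1}{2}\rfloor$: at this stage $D^{(j)}$ would count regular closed paths of length $m-2j \in \{1,2\}$ with a tail, which is impossible---no regular closed path of length $\le 4$ can have a tail, since $p_1 = p_{L-1}$ would contradict either adjacency or the regularity constraint $p_1 \ne p_3$. Telescoping then produces the displayed identity. The main subtlety will be the cyclic-shift identification of $S_2$: because the edge weights $w(e)$ need not be symmetric under reversal, one cannot simply reverse $p$; cyclic rotation, however, preserves $w(p)$ (same directed edges in the same directions, just reordered), and combined with the symmetry of the pair-weight $W$ this is precisely what makes the identification work.
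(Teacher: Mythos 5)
Your overall strategy --- peel the tail off a regular closed path of length $m$ to get a regular closed path of length $m-2$ weighted by $W(u,v)$, correct by inclusion--exclusion for the two regularity constraints at the joint, identify the unconstrained sum with $\tr(A_{m-2j}B_{2j})$ and each single constraint with $\tr C_{m-2j,j}$, and recurse on the double overlap until no tailed regular closed path can exist --- is exactly the paper's argument, and the base cases, the recursion, and the termination at $j=\lfloor\frac{m-1}{2}\rfloor$ are all in order.

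The one step that does not hold as you state it is the identification $S_2=\tr C_{m-2,1}$ via the cyclic shift. The rotation $(v,p)\mapsto(p_{m-3},p^\sigma)$ does preserve $w(p)$ and does carry $W(p_{m-3},v)$ to $W((p^\sigma)_0,(p^\sigma)_1)$, but it is \emph{not} a bijection of the indexing set, because $p^\sigma$ need not be regular: regularity of $p^\sigma$ at its first interior vertex is the condition $p_{m-3}\ne p_1$, i.e.\ that $p$ has no tail, while the regularity of $p$ at $p_{m-3}$ becomes irrelevant for $p^\sigma$. Concretely, on the graph with edges $\{a,b\},\{b,c\},\{c,d\},\{d,b\}$ the path $p=(a,b,c,d,b,a)$ is regular with a tail, but $p^\sigma=(b,a,b,c,d,b)$ backtracks at its first step. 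So the rotation only matches the no-tail paths with the no-tail paths. The identity $S_2=S_1$ is nevertheless true, and the patch is short: split both sums according to whether $p$ has a tail; on the no-tail part your rotation is a genuine weight-preserving bijection (this is precisely the shift-invariance of ``regular without tail'', i.e.\ the cycle property); on the tailed part $p_1=p_{m-3}$ forces $W(p_{m-3},p_0)=W(p_1,p_0)$ term by term, so the two restricted sums are literally identical, using only the symmetry $W(x,y)=W(y,x)$. The same patch is needed at every stage of your recursion. For what it is worth, the paper's own justification of this step --- ``by reversing the path, one condition becomes the other'' --- is also loose, since reversal does not preserve $w(p)$ for non-symmetric weights; the tailed/untailed splitting is what actually makes the step work in both write-ups.
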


\begin{proof}
We compute
\begin{align*}
N_m&= \sum_{\substack{l(p)=m\\
\text{reg. closed}\\
\text{no tail}}}w(p)=\tr A_m-\sum_{\substack{l(p)=m\\
\text{reg. closed}\\
\text{with tail}}}w(p)\\
&=\tr A_m-\tr A_{m-2}B_2+\overbrace{\sum_{x,x'}W(x,x')
\sum_{\substack{
l(p)=m-2\\
\text{reg. closed}\\
x_0=x'\\
x_1=x\ \text{or}\ x_{m-3}=x
}}w(p)}^{=*}
\end{align*}
The first sum in the last line ranges over all pairs $x,x'$ of neighbored vertices.
The last sum contains those contributions, for which only one of the last two conditions is satisfied.
By reversing the path, one condition becomes the other, so that we get
\begin{align*}
*&=2\sum_{x,x'}W(x,x')
\sum_{\substack{
l(p)=m-2\\
\text{reg. closed}\\
x_0=x',\ x_1=x,\\ \text{no tail}
}}w(p)\\
&\quad+\sum_{x,x'}W(x,x')
\sum_{\substack{
l(p)=m-2\\
\text{reg. closed}\\
x_0=x',\ x_1=x,\\ \text{with tail} 
}}w(p)\\
&= 2\tr C_{m-2,1}-
\sum_{x,x'}W(x,x')
\sum_{\substack{
l(p)=m-2\\
\text{reg. closed}\\
x_0=x', x_1=x,\\
\text{with tail}
}}w(p)
\end{align*}
In the last sum, the tail can be collapsed again, giving another factor of $W(x,x')$, so that we can write it as $\tr A_{m-4}B_4$ minus a contribution of the form $2\tr C_{m-4,2}$ plus another sum over paths with tails and so on.
We end up with the claim.
\end{proof}

\begin{proposition}
Let $C(u)=\sum_{m,n=1}^\infty u^{m+2n}C_{m,n}$. Then
$$
u\frac{Z'}Z(u)=\tr\left[ (A(u)-1)(2-B_\ev(u))+2C(u)\right].
$$
\end{proposition}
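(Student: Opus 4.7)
The strategy is a direct coefficient comparison. By Lemma~\ref{lem2.1}, the left-hand side equals $\sum_{m\ge 1} N_m u^m$, and Lemma~\ref{lem2.4} expresses each $N_m$ as a specific combination of traces of $A_j$, $B_{2j}$, and $C_{j,k}$. So the plan is to expand the right-hand side as a formal power series in $u$ and identify the coefficient of $u^k$ on both sides for every $k$.

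Writing $A(u)-1=\sum_{m\ge 1}u^m A_m$ and $2-B_\ev(u)=1-\sum_{n\ge 1}u^{2n}B_{2n}$, I would multiply out to get
$$
(A(u)-1)(2-B_\ev(u))=\sum_{m\ge 1}u^m A_m-\sum_{m,n\ge 1}u^{m+2n}A_m B_{2n}.
$$
Grouping by total degree $k=m+2n$ (with $m\ge 1$ forcing $n\le\lfloor(k-1)/2\rfloor$), the coefficient of $u^k$ is $A_k-\sum_{n=1}^{\lfloor(k-1)/2\rfloor}A_{k-2n}B_{2n}$, where the second sum is empty for $k\le 2$. Similarly $2C(u)$ contributes $2\sum_{n=1}^{\lfloor(k-1)/2\rfloor}C_{k-2n,n}$ to the coefficient of $u^k$, also empty for $k\le 2$. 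Taking the trace and comparing with Lemma~\ref{lem2.4} then matches the $u^k$-coefficient with $N_k$ for every $k\ge 3$.

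Two small points remain. First, for $k=1$ and $k=2$ the right-hand side coefficient is simply $\tr A_k$, so I need $\tr A_1=\tr A_2=0=N_1=N_2$. The vanishing of $\tr A_1$ is immediate, since the graph has no loops and hence $A_1$ has zero diagonal; $\tr A_2=0$ because any closed path of length two has the form $x\to x'\to x$ and therefore backtracks, while $A_2$ only counts regular paths. Second, one should note that for $|u|$ small the operator-valued series $A(u)$, $B(u)$, $C(u)$ converge absolutely in trace norm thanks to the bounded valency and finite total weight (as already used in the proof of Theorem~\ref{thm1.1}), so the termwise manipulation with traces is legitimate. The main obstacle is purely clerical: matching the index ranges from the two generating functions with those in Lemma~\ref{lem2.4} and verifying the vanishing of the two low-order traces; no new idea beyond Lemmas~\ref{lem2.1} and~\ref{lem2.4} is required.
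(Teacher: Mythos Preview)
Your proposal is correct and is precisely the same approach as the paper's, which simply states that the proposition ``is just a reformulation of Lemma~\ref{lem2.4}''. You have merely made that reformulation explicit by expanding the generating functions, matching coefficients with the formula of Lemma~\ref{lem2.4}, and checking the easy low-degree cases $k=1,2$ via $\tr A_1=\tr A_2=0$; your added remark on trace-norm convergence is a sensible precaution but not a departure from the paper's argument.
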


\begin{proof}
This is just a reformulation of Lemma \ref{lem2.4}.
\end{proof}

\begin{lemma}\label{lem2.6}
For all $m,n\in\N$ one has
$$
\tr C_{m,n}=\sum_{j=0}^{2n}(-1)^j\tr A_{m+j}B_{2n-j}.
$$
\end{lemma}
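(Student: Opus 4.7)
The approach is to interpret both sides as weighted counts of closed paths of length $M := m+2n$, classified by their backtracking profile at intermediate positions, and then collapse an alternating sum by telescoping. For $-1 \le k \le M-1$, let $F_k := \sum_q w(q)$ where the sum runs over closed paths $q = (q_0, q_1, \dots, q_M)$ of length $M$ in $X$ such that positions $1, \dots, k$ are forced backtracks (meaning $q_{i-1} = q_{i+1}$, so the initial segment oscillates along a single edge), while positions $k+1, \dots, M-1$ are non-backtracks. Set $F_{-1} := 0$, so that $F_0 = \tr A_M$ counts reduced closed paths of length $M$.

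For the left-hand side, the weight $W(x_0, x_1)^n w(p)$ of a reduced closed path $p = (x_0, \dots, x_m)$ of length $m$ coincides with the weight of the closed path of length $M$ obtained by prepending $2n$ alternating back-and-forth steps along the edge $\{x_0, x_1\}$; this extended path has forced backtracks in positions $1, \dots, 2n$ and is non-backtracking at positions $2n+1, \dots, M-1$ by reducedness of $p$, so $\tr C_{m,n} = F_{2n}$. For a summand on the right, I unfold $A_{m+j} B_{2n-j}$ step by step: the factor $B_{2n-j}$ realizes a $(2n-j)$-step oscillation along some edge $\{x,x'\}$, ending at $x$ or $x'$ according to the parity of $2n-j$, after which $A_{m+j}$ appends a reduced path of length $m+j$. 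Taking the trace enforces closedness; the resulting closed path of length $M$ then has forced backtracks in positions $1, \dots, 2n-j-1$ (from the oscillation) and no backtracks in positions $2n-j+1, \dots, M-1$ (from the $A$-factor), while position $2n-j$ is unconstrained, the backtrack occurring precisely when the reduced continuation begins by reversing the last oscillation edge. Splitting on this dichotomy yields
$$
\tr A_{m+j} B_{2n-j} = F_{2n-j} + F_{2n-j-1}, \qquad 0 \le j \le 2n,
$$
where the convention $F_{-1} = 0$ handles $j = 2n$ cleanly.

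Substituting and reindexing by $l = 2n - j$ in the first piece and $l = 2n - j - 1$ in the second produces the telescoping
$$
\sum_{j=0}^{2n} (-1)^j \tr A_{m+j} B_{2n-j} = \sum_{l=0}^{2n} (-1)^l F_l - \sum_{l=0}^{2n-1} (-1)^l F_l = (-1)^{2n} F_{2n} = F_{2n} = \tr C_{m,n},
$$
which is the claim. The only non-formal step is the path interpretation of $A_{m+j} B_{2n-j}$; this requires a small case split on the parity of $2n-j$, because $B_{2n-j}$ is defined by different formulas in the two parities, but once that is in hand the rest is combinatorial bookkeeping.
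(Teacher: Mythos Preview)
Your proof is correct. The underlying combinatorics is the same as in the paper, but the organization differs: the paper establishes the three-term recursion
\[
\tr C_{m,n}=\tr A_mB_{2n}-\tr A_{m+1}B_{2n-1}+\tr C_{m+2,n-1}
\]
by peeling off one oscillation step at a time and then iterates it down to $n=0$. Your argument instead introduces the interpolating family $F_k$ of closed paths of fixed total length $M=m+2n$ stratified by the length of the initial oscillating segment, identifies $\tr C_{m,n}=F_{2n}$ and $\tr A_{m+j}B_{2n-j}=F_{2n-j}+F_{2n-j-1}$, and telescopes in one stroke. Both are the same cancellation; your version makes the telescoping structure explicit and avoids the auxiliary recursion, while the paper's recursive formulation has the advantage of reusing the intermediate identity $\tr C_{m,n}=\tr A_{m-1}B_{2n+1}-\tr A_{m-2}B_{2n+2}+\tr C_{m-2,n+1}$, which is closer in shape to the manipulations already carried out in Lemma~\ref{lem2.4}.
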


\begin{proof}
We first show that for $m\ge 3$ we have
\begin{align}\label{equ1}
\tr C_{m,n}=\tr A_{m-1}B_{2n+1}-\tr A_{m-2}B_{2n+2}+\tr C_{m-2,n+1}.
\end{align}
For this we compute
\begin{align*}
\tr C_{m,n}&=\sum_{x,x'}W(x,x')^{n}\sum_{\substack{l(p)=m\\ \text{regular, closed}\\
x_0=x,\ x_1=x'}}w(p)\\
&=\tr A_{m-1}B_{2n+1}-\sum_{x,x'}W(x,x')^{n}\sum_{\substack{l(p)=m-1\\ \text{regular}\\
x_0=x',\ x_1=x_{m-1}=x}}w(p)\\
&=\tr A_{m-1}B_{2n+1}-\tr A_{m-2}B_{2n+2}+\tr C_{m-2,n+1}.
\end{align*}
Replacing $m$ with $m+2$ and $n$ with $n-1$ the equation (\ref{equ1}) becomes
\begin{align}\label{equ2}
\tr C_{m,n}=\tr A_mB_{2n}-\tr A_{m+1}B_{2n-1}+\tr C_{m+2,n-1},
\end{align}
which now holds for all $m\ge 1$.
Applying equation (\ref{equ2}) to the last summand of itself and repeating this step, one gets
\begin{align*}
\tr C_{m,n}&=tr A_mB_{2n}-\tr A_{m+1}B_{2n-1}+\tr C_{m+2,n-1}\\
&=tr A_mB_{2n}-\tr A_{m+1}B_{2n-1}\\
&\ +\tr A_{m+2}B_{2n-2}-\tr A_{m+3}B_{2n-3}+tr C_{m+4,n-2}\\
&=\sum_{\nu=0}^{2n}(-1)^j\tr A_{m+j}B_{2n-j}.
\tag*\qedhere
\end{align*}
\end{proof}

We write
$$
A'(u)=\sum_{m=1}^\infty mu^{m-1}A_m
$$
for the formal derivative of $A(u)$.
For $m\in\N$ we also write
$$
A_{\ge m}(u)=\sum_{j\ge m}u^jA_j.
$$

\begin{lemma}
We have
$$
\tr C(u)=\tr\left[ \frac u2A'(u)B(-u)+\frac12A_\odd(u)B(u)-A(u)+1\right].
$$
\end{lemma}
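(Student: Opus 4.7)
The plan is to reduce $\tr C(u)$ to a single trace of the form $-\tr[A(u) G(u)]$ for an explicit operator-valued series $G(u)$ in the $B_l$, and then to establish the operator identity $R(u) = -A(u) G(u)$, where
$$R(u) := \tfrac{u}{2} A'(u) B(-u) + \tfrac{1}{2} A_\odd(u) B(u) - A(u) + 1$$
is the right hand side of the claim; taking traces finishes the proof. First, Lemma \ref{lem2.2} says $A(u) B(-u) = 1$, so for each $N = m + 2n \geq 1$ the coefficient of $u^N$ in $A(u)B(-u)$ vanishes: $\sum_{i=0}^{N}(-1)^{N-i} A_i B_{N-i} = 0$. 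Splitting this sum at the cut $i = m$ rewrites Lemma \ref{lem2.6} as
$$\tr C_{m,n} = -\sum_{i=0}^{m-1}(-1)^{m-i}\tr A_i B_{m+2n-i}.$$
Substituting into $\tr C(u) = \sum_{m, n \geq 1} u^{m+2n} \tr C_{m, n}$ and grouping by $(i, l) = (i, m+2n-i)$: the sign $(-1)^{m-i}$ equals $(-1)^l$ (since $2n$ is even) and is independent of the inner summation, while the number of valid pairs $(m, n)$ with $m \geq i+1$ and $n \geq 1$ yielding a prescribed $(i, l)$ is $\lfloor (l-1)/2 \rfloor$ (and vanishes for $l \leq 2$). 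Therefore
$$\tr C(u) = -\tr\bigl[A(u)\, G(u)\bigr], \qquad G(u) := \sum_{l \geq 3}(-1)^l\left\lfloor\tfrac{l-1}{2}\right\rfloor u^l B_l.$$

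Next I identify $G(u)$ via derivatives of $B(-u)$. Formally differentiating $A(u) B(-u) = 1$ gives $A'(u) B(-u) = A(u) B'(-u)$, so $\tfrac{u}{2} A'(u) B(-u) = A(u)\cdot \tfrac{u}{2} B'(-u)$. A direct term-by-term computation, splitting into $l$ odd and $l$ even, collapses the floor function and produces
$$G(u) + \tfrac{u}{2} B'(-u) = \tfrac{1}{2} B_\odd(u) - B_\ev(u) + 1.$$
Combining with the previous step, $R(u) + A(u) G(u)$ simplifies to
$$\tfrac{1}{2} A(u) B_\odd(u) + \tfrac{1}{2} A_\odd(u) B(u) - A(u) B_\ev(u) + 1.$$

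To conclude, I apply Lemma \ref{lem2.2} at both $u$ and $-u$: adding the two resulting identities gives $A_\ev(u) B_\ev(u) - A_\odd(u) B_\odd(u) = 1$, and subtracting gives $A_\ev(u) B_\odd(u) = A_\odd(u) B_\ev(u)$. Decomposing $A = A_\ev + A_\odd$ and $B = B_\ev + B_\odd$ in the previous display and using these two identities collapses the expression to $-1 + 1 = 0$ as an operator. Hence $R(u) = -A(u) G(u)$ holds on the operator level, and taking traces yields $\tr R(u) = -\tr[A(u) G(u)] = \tr C(u)$, as required. The one mildly delicate ingredient is the parity bookkeeping producing the closed form for $G(u) + \tfrac{u}{2} B'(-u)$; after that, the remainder is a clean operator cancellation driven entirely by Lemma \ref{lem2.2}.
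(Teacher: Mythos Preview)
Your argument is correct, and in fact you prove the slightly stronger operator identity $R(u)=-A(u)G(u)$, not merely an equality of traces. The route, however, is genuinely different from the paper's. The paper keeps Lemma~\ref{lem2.6} in its original form $\tr C_{m,n}=\sum_{j=0}^{2n}(-1)^j\tr A_{m+j}B_{2n-j}$, recognizes the inner sum as the even part of $u^{-m}A_{\ge m}(u)B(-u)$, and then collapses the outer $m$-sum via the identities $\sum_{m\ge 1}A_{\ge m}(u)=uA'(u)$ and $\sum_{m\ge 1}(-1)^mA_{\ge m}(-u)=A_\odd(u)$; the whole computation stays at the level of traces and uses the ``even part'' trick to avoid any explicit floor-function bookkeeping. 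You instead invoke Lemma~\ref{lem2.2} at the outset to flip the range in Lemma~\ref{lem2.6} to indices $i<m$, package everything into $-\tr[A(u)G(u)]$ with an explicit $G(u)$, and then finish by pure operator algebra using the parity decomposition of $A(u)B(-u)=1$ at $\pm u$. Your approach trades the paper's generating-function manipulation of tail sums $A_{\ge m}$ for a direct combinatorial count of $\lfloor(l-1)/2\rfloor$ and a clean cancellation at the end; as a bonus it yields an identity of operators rather than of traces.
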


\begin{proof}
Using Lemma \ref{lem2.6} we compute
\begin{align*}
&\tr C(u)\\
&=
\sum_{m,n=1}^\infty u^{m+2n}\tr C_{m,n}\\
&=\sum_{m,n=1}^\infty u^{m+2n}\sum_{j=0}^{2n}(-1)^j\tr A_{m+j}B_{2n-j}\\
&=\sum_{m=1}^\infty u^m\tr\left[\(\sum_{i=0}^\infty u^{i}A_{m+i}\)\(\sum_{j=0}^\infty(-u)^jB_{j}\)\right]^{\rm even}-u^m\tr A_m\\
&=\sum_{m=1}^\infty u^m\tr\left[
u^{-m}A_{\ge m}(u)
B(-u)
\right]^{\rm even}-u^m\tr A_m\\
&=\frac12\sum_{m\ge 1}u^m\tr\(u^{-m}A_{\ge m}(u)B(-u)+(-1)^mu^{-m}A_{\ge m}(-u)B(u)\)\\
&\ -\tr A_{\ge 1}(u)\\
&=\tr\left[\frac12\sum_{m\ge 1}A_{\ge m}(u)B(-u)
+\frac12\sum_{m\ge 1}(-1)^mA_{\ge m}(-u)B(u)
- A_{\ge 1}(u)\right]
\end{align*}
Now note that 
\begin{align*}
\sum_{m=1}^\infty A_{\ge m}(u)=\sum_{m=1}^\infty mu^mA_m
=uA'(u)
\end{align*}
and
$$
\sum_{m\ge 1} (-1)^mA_{\ge m}(-u)=A_\odd(u).
$$
The claim follows.
\end{proof}

\begin{lemma}\label{lem2.9}
We have
$$
\frac{Z'}Z(u)=\tr\left[ \frac1u\left[B_\ev(u)-1\right]+A'(u)B(-u)\right].
$$
\end{lemma}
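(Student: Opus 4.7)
The strategy is to substitute the expression for $\tr C(u)$ from the previous lemma into the earlier identity
$$u\frac{Z'}Z(u)=\tr\left[(A(u)-1)(2-B_\ev(u))+2C(u)\right]$$
provided by the Proposition preceding Lemma \ref{lem2.6}, and then to simplify algebraically. Expanding $(A(u)-1)(2-B_\ev(u)) = 2A(u) - A(u)B_\ev(u) - 2 + B_\ev(u)$ and using $2\tr C(u)=\tr[uA'(u)B(-u)+A_\odd(u)B(u)-2A(u)+2]$, the terms $\pm 2A(u)$ and $\pm 2$ cancel, leaving
$$u\frac{Z'}Z(u)=\tr\left[B_\ev(u) + u\,A'(u)B(-u) + A_\odd(u)B(u) - A(u)B_\ev(u)\right].$$
Dividing by $u$, the claim of Lemma \ref{lem2.9} follows once I establish the operator-valued identity of formal power series
$$A_\odd(u)B(u) - A(u)B_\ev(u) = -1.$$

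For this I split $A=A_\ev+A_\odd$ and $B=B_\ev+B_\odd$ and expand directly:
$$A_\odd(u)B(u) - A(u)B_\ev(u) = A_\odd(u)B_\odd(u) - A_\ev(u)B_\ev(u).$$
So it is enough to prove $A_\ev(u)B_\ev(u) - A_\odd(u)B_\odd(u) = 1$. For this I invoke Lemma \ref{lem2.2}, which asserts $A(u)B(-u)=1$; substituting $u\mapsto -u$ in this formal identity also yields $A(-u)B(u)=1$. Writing each of these in terms of the even/odd pieces (using $A(-u)=A_\ev(u)-A_\odd(u)$ and $B(-u)=B_\ev(u)-B_\odd(u)$) and adding the two identities, the cross terms $A_\odd B_\ev - A_\ev B_\odd$ cancel, and one is left with $2(A_\ev B_\ev - A_\odd B_\odd)=2$, as required.

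The main obstacle is purely algebraic: the identity $A(u)B(-u)=1$ alone only gives the combination $A_\ev B_\ev - A_\ev B_\odd + A_\odd B_\ev - A_\odd B_\odd = 1$, whose odd-even cross terms cannot be discarded on their own. The key trick is to use the same lemma a second time under $u\mapsto -u$; averaging the two identities eliminates the cross terms and isolates the needed relation. Once this parity observation is in place, everything else is bookkeeping.
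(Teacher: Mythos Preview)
Your proof is correct and follows essentially the same route as the paper: substitute the expression for $\tr C(u)$, simplify, and then reduce the remaining terms using the parity decomposition together with Lemma~\ref{lem2.2}. The only cosmetic difference is that the paper groups the leftover terms as $\frac1u\bigl[A_{\ge 1}(u)B(-u)\bigr]^{\rm even}$ and applies $A(u)B(-u)=1$ once to get $1-B_\ev(u)$, whereas you phrase the same parity extraction as averaging $A(u)B(-u)=1$ with its $u\mapsto -u$ version to obtain $A_\ev B_\ev - A_\odd B_\odd = 1$; these two arguments are equivalent.
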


\begin{proof}
We compute
\begin{align*}
-\frac{Z'}Z(u)&=
\tr\left[\frac1uA_{\ge 1}(u)(B_\ev(u)-2)-\frac 2uC(u)\right]\\
&= \tr\[ \frac1uA_{\ge 1}(u)(B_\ev(u)-2)\right.\\
&\ \ \ \ \left.
- A'(u)B(-u)-\frac1uA_\odd(u)B(u)+\frac 2uA_{\ge 1}(u)\]\\
&= \tr\[ \frac1uA_{\ge 1,\ev}(u)B_\ev(u)+\frac1uA_\odd(u)B_\ev(u)\right.\\
&\ \ \ \ \left.-\frac1uA_\odd(u)B_\ev(u)-\frac1uA_\odd(u)B_\odd(u)-A'(u)B(-u)\]\\
&= \tr\[\frac1u\[A_{\ge 1}(u)B(-u)\]^{\rm even}-A'(u)B(-u)\].
\end{align*}
By $A_{\ge 1}(u)=A(u)-1$ and $A(u)B(-u)=1$, this implies the claim.
\end{proof}

The Fredholm determinant can be extended to be applicable  to a formal power series of the form
$$
1+T(u)=1+T_1 u+T_2 u^2+\dots,
$$
where each $T_j$ is a trace class operator on some Hilbert space $H$ by defining
$$
\det(1+T(u))=\sum_{k=0}^\infty (-1)^k\tr\wedge^k T(u),
$$
where $\wedge^k T(u)$ is considered an element of $\CT\(\wedge^k H\)[[u]]$ and $\CT$ being the algebra of trace class operators.

For an oriented edge $e\in\OE(X)$  write $o(e)$ for its starting vertex and $\tau(e)$ for its terminal vertex.
Write $C_0=\ell^2(V X)$ and $C_1=\ell^2(\OE X)$ and let $\sigma:C_0\to C_1$ be defined by
$$
\sigma(x)=\sum_{e:o(e)=x}w(e)e.
$$
Further let $J:C_1\to C_1$ be the weighted flip, i.e., 
$$
J(e)=w(e^{-1})e^{-1},
$$
where $e^{-1}$ is the reverse of the oriented edge $e$.
For $n\ge 1$ one has
$$
B_n=\tau J^{n-1}\sigma
$$
and so
$$
B(u)=1+u\tau(1-uJ)^{-1}\sigma.
$$

\begin{theorem}
[Ihara-Sunada formula]\label{thm2.10}
For weighted graphs, which may be infinite, but have finite total weight, the entire function $Z(u)^{-1}$ can be written as
$$
Z(u)^{-1}=\det\(1-u\tau(1+uJ)^{-1}\sigma\)\,\prod_{e\in EX} \(1-u^2W(e)\),
$$
the product over all edges $e$ converges to an entire function with zeros at $\pm 1/\sqrt{W(e)}$, where $e$ runs through the set of edges.
\end{theorem}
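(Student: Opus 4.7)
The plan is to integrate the formula of Lemma~\ref{lem2.9} after recognising each of its two trace terms as the logarithmic derivative of one of the two factors on the right-hand side of the Ihara-Sunada formula.

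First, I would treat $\tfrac{1}{u}\tr(B_\ev(u)-1)$. Since Lemma~\ref{lem2.3} gives $B_\ev(u)-1$ explicitly as a diagonal operator, a direct computation produces
$$
\tfrac{1}{u}\tr(B_\ev(u)-1)=2\sum_{e\in EX}\tfrac{uW(e)}{1-u^2W(e)}=-\tfrac{d}{du}\log\prod_{e\in EX}(1-u^2W(e)),
$$
the factor $2$ arising because each unordered edge is counted in both orientations when summing over neighbours. The estimate $\sum_e W(e)\le w(X)^2<\infty$, which is immediate from $w(X)<\infty$ together with the resulting boundedness of $w$, ensures that the product converges locally uniformly to an entire function whose zeros are exactly the claimed $\pm 1/\sqrt{W(e)}$.

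Second, I would treat $\tr[A'(u)B(-u)]$. By Lemma~\ref{lem2.2}, $A(u)=B(-u)^{-1}$; differentiating $A(u)B(-u)=1$ in $u$ and taking the trace gives
$$
\tr[A'(u)B(-u)]=-\tr\left[B(-u)^{-1}\tfrac{d}{du}B(-u)\right]=-\tfrac{d}{du}\log\det B(-u),
$$
which is the standard logarithmic-derivative formula for Fredholm determinants. Inserting both identifications into Lemma~\ref{lem2.9} and integrating from $0$ to $u$ (all integration constants vanish since $Z(0)=1$, $B(0)=1$, and the empty product is $1$) yields $Z(u)^{-1}=\det B(-u)\cdot\prod_{e\in EX}(1-u^2W(e))$. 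Finally, substituting $-u$ for $u$ in the identity $B(u)=1+u\tau(1-uJ)^{-1}\sigma$ recorded just before the theorem rewrites $\det B(-u)$ in the form claimed.

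The main obstacle is operator-theoretic rather than algebraic: one must verify that each $B_n=\tau J^{n-1}\sigma$ is trace class, so that $\det B(-u)$ is a bona fide Fredholm determinant of a formal power series in the sense recalled after Theorem~\ref{thm1.1}, and one must justify that $\tr$ commutes with $d/du$ and with $\log$. For the even indices, $B_{2k}$ is diagonal with trace $2\sum_e W(e)^k<\infty$. For the odd indices, $B_{2k+1}$ is the convergent sum of rank-one operators $\sum_{(x,y)}W(x,y)^k w(x,y)\,|y\rangle\langle x|$ whose trace-norm total is bounded by $2w(X)\,\|W\|_\infty^k$, hence is again trace class. With all $B_n$ of trace class, the Fredholm-determinant calculus recalled before Theorem~\ref{thm1.1} legitimises every manipulation above for $|u|$ small, and analytic continuation then propagates the identity to all of $\C$.
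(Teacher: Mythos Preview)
Your proposal is correct and follows essentially the same route as the paper: start from Lemma~\ref{lem2.9}, identify $\tfrac{1}{u}\tr(B_\ev(u)-1)$ as the logarithmic derivative of the infinite product (the paper does exactly this computation), recognise $\tr[A'(u)B(-u)]$ as the logarithmic derivative of $\det A(u)=\det B(-u)^{-1}$, integrate, and fix the constant at $u=0$. Your treatment is in fact more complete than the paper's in one respect: you explicitly check that each $B_n$ is trace class and justify the Fredholm calculus, whereas the paper leaves this implicit.
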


\begin{proof}
Using the orthogonal basis of $VX$ given by the vertices, we compute
\begin{align*}
\tr\left[\frac1u(1-B_\ev(u))\right]
&= -\frac1u\sum_x\sum_{x'\sim x}\(\frac1{1-u^2W(x,x')}-1\)\\
&= -\frac 2u\sum_e\frac{u^2W(e)}{1-u^2W(e)}\\
&= \sum_e\(\log(1-u^2W(e))\)'
\end{align*}
We integrate this function from zero to $u$ and take the exponential to get the product
$$
\prod_e \(1-u^2W(e)\).
$$
Since $\sum_{e}W(e)<\infty$, this product converges everywhere to an entire function.
Note that $B(-u)=A(u)^{-1}$, so the first factor can also be written as $\det(A(u))^{-1}=\det(B(-u))$.
\end{proof}

{\bf Remark.}
In the special case of a finite graph and weight one these factors are
$$
\det(B(-u))=\det\(\frac{1-uA+u^2Q}{1-u^2}\)
$$
and
$$
\prod_e (1-u^2W(e))=
(1-u^2)^{|EX|},
$$
where $EX$ is the set of edges.
So that in total one gets the classical Ihara formula
$$
Z(u)^{-1}=(1-u^2)^{-\chi}\det(1-uA+u^2Q),
$$
where $\chi=|VX|-|EX|$ is the Euler number of the graph, which is $\le 0$ if $X$ is not a tree.

\section{The Bass-Ihara formula}
In this section we follow the approach of Bass in \cite{Bass} to the Ihara formula. It turns out that in the case of weighted graphs some natural deformation cannot occur and so Bass's approach leads to a different kind of determinant formula.
The title of this section refers to the fact that Bass's approach can be viewed as a deformation of the homology complex.
Let $C_0=\ell^2(VX)$ and $C_1=\ell^2(\OE(X))$ and consider the  weighted flip $J(e)=w(e^{-1})e^{-1}$ which maps an oriented edge to its inverse times its weight.
Consider also
the operator
$\sigma: C_0\to C_1$ defined by
\begin{align*}
\sigma(x)&=\sum_{e:o(e)=x}w(e)e.
\end{align*}

\begin{theorem}[Bass-Ihara formula]
One has
$$
Z(u)^{-1}=\det\left[ 1+u\mat {uB_2-A}{u\tau J}\sigma J\right].
$$
\end{theorem}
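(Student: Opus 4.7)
The plan is to deduce the formula from Theorem~\ref{thm1.1}, which identifies $Z(u)^{-1}$ with $\det(1-uT)$ for the edge operator $T$ on $C_1=\ell^2(\OE X)$.

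First, I would rewrite $T$ in the coordinate-free form $T=\sigma\tau-J$ on $C_1$. This is immediate from the definitions: $(\sigma\tau)(e)=\sigma(t(e))=\sum_{e':\,o(e')=t(e)}w(e')e'$, which is exactly $Te$ together with the single summand $e'=e^{-1}$ that was excluded in the definition of $T$; that missing summand equals $w(e^{-1})e^{-1}=Je$. Hence
$$
Z(u)^{-1}=\det(1-uT)=\det\bigl(1+uJ-u\sigma\tau\bigr).
$$

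Second, the goal is to recognise the right-hand side of the theorem as a $2\times2$ block Fredholm determinant on $C_0\oplus C_1$ whose Schur complement reproduces $1+uJ-u\sigma\tau$. Using the relations $A=\tau\sigma$ and $B_2=\tau J\sigma$, the upper-left entry of $1+uM$ can be rewritten as $1+u(uB_2-A)=1+u\tau(uJ-1)\sigma$, so $1+uM$ is visibly assembled from the Bass data $(\sigma,\tau,J)$. I would then perform a block Schur-complement reduction on $1+uM$, factoring out the lower-right block $1+uJ$ (which is invertible for small $u$ since $J$ is bounded and then by analytic continuation). The cyclic determinant identity $\det(1+XY)=\det(1+YX)$, applied to the resulting inner determinant on $C_0$, transports it to a determinant on $C_1$; after simplification inside the commutative ring generated by $J$ and $(1+uJ)^{-1}$ it should be identified with $\det(1-u(1+uJ)^{-1}\sigma\tau)$, and restoring the prefactor $\det(1+uJ)$ recovers $\det(1+uJ-u\sigma\tau)=\det(1-uT)$, as required. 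Trace-class bookkeeping for the block operator follows from Theorem~\ref{thm1.1} together with the standard operator bounds on $\sigma,\tau$ used already in Section~\ref{sec2}.

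The main obstacle lies in the algebraic manipulation of the Schur complement in the second step. In the unweighted case Bass exploits $J^2=1$ to telescope his block determinant into the classical Ihara shape $(1-u^2)^{-\chi}\det(1-uA+u^2Q)$. In the weighted setting, however, $J^2$ is only the multiplication by the edge weight $W(e)$ rather than the identity, and this telescoping is precisely the ``natural deformation'' flagged by the author at the start of the section as unavailable. Consequently the Schur reduction cannot be pushed beyond the block form asserted in the theorem, and this is exactly why the Bass-Ihara formula here genuinely differs from the Ihara-Sunada formula of Theorem~\ref{thm2.10}.
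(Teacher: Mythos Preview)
Your opening move $T=\sigma\tau-J$, hence $Z(u)^{-1}=\det(1+uJ-u\sigma\tau)$, is correct and is also the algebraic heart of the paper's argument. From there, however, the paper does \emph{not} run a Schur complement. It writes down two explicit block operators on $C_0\oplus C_1$,
\[
L=\begin{pmatrix}1 & u\tau J-\tau\\ 0 & 1-uJ\end{pmatrix},\qquad
M=\begin{pmatrix}1 & \tau\\ u\sigma & 1+uJ\end{pmatrix},
\]
computes $ML$ (lower triangular with $(1-uT)(1-uJ)$ in the bottom corner) and $LM$ (the diagonal $\operatorname{diag}(1,1-uJ)$ times the block matrix in the statement), and then reads off the claim from $\det(LM)=\det(ML)$ after cancelling $\det(1-uJ)$. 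This is Bass's two-matrix trick: no inversion of $1+uJ$, no Sylvester identity, and the trace-class condition is only needed for the two \emph{products} $LM-1$ and $ML-1$, not for $L-1$ or $M-1$ separately (indeed $\tau$ alone is not trace class).

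Your Schur/Sylvester route is a reasonable alternative, but the step you wave through (``after simplification \dots\ it should be identified with $\det(1-u(1+uJ)^{-1}\sigma\tau)$'') is precisely where the content sits, and as written it does not close. With the $(1,2)$-block equal to $u\tau J$ the Schur complement of $1+uJ$ is
\[
1-u\,\tau\bigl[\,1-uJ+u^2J(1+uJ)^{-1}\,\bigr]\sigma,
\]
and the bracketed operator equals $(1+uJ)^{-1}$ only if $J^2=J$, which fails for non-constant weights; so the cyclic identity does not hand you $\det(1-u(1+uJ)^{-1}\sigma\tau)$. In fact the paper's own computation of $LM$ yields $u^2\tau J^{2}$ in the $(1,2)$-slot, and with $u\tau J^{2}$ in place of $u\tau J$ the bracket becomes $(1-uJ)+u^2J^{2}(1+uJ)^{-1}=(1+uJ)^{-1}$ identically, after which your argument goes through verbatim. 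So either repair the off-diagonal entry before taking the Schur complement, or switch to the $L,M$ factorisation the paper uses.
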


\begin{proof}
We consider the following operators on $C_0\oplus C_1$:
\begin{align*}
L&=\mat 1 {u\tau J-\tau} 0 {1-uJ} &M&=\mat 1{\tau} {u\sigma}{1+u J},
\end{align*}
where one stands for the identity operator on the respective spaces.
A computation shows
$$
ML=\mat 1 0 {u\sigma}{(1-uT)(1-uJ)}
$$
and
\begin{align*}
LM&= 1+u\mat{-A} 0 {\sigma}0+u^2\mat {B_2}{\tau J^2}{J\sigma}{J^2}\\
&= \mat 1\ \ {1-uJ} \left[ 1+u\mat {uB_2-A}{u\tau J}\sigma J\right].
\end{align*}
As $\det(LM)=\det(ML)$ we conclude the claim.
\end{proof}

\section{Twisting with local systems}
A local system is the same as a locally constant sheaf.
To give a local system of complex vector spaces on $X$ is the same as giving a finite dimensional representation $\tau:\Ga\to\GL(V)$ of the fundamental group $\Ga=\pi_1(X)$, which for this purpose may be defined as the group of deck transformations on the universal covering $\tilde X$. 
In that case the set $\Ga\bs\(\tilde X\times V\)$ is the total space of the sheaf and sections can be described as maps $s:\tilde X\to V$ satisfying $s(\ga x)=\tau(\ga)s(x)$ for all $x\in\tilde X$, $\ga\in\Ga$.
We will only consider \e{hermitian} local systems, which means that we assume a given inner product on $V$ such that the representation $\tau$ is unitary.

Fixing a base point $x_0\in X$, the group $\Ga$ can be identified with the fundamental group $\pi_1(X,x_0)$ at the point $x_0$ and every closed path $p$ in $X$ defines a conjugacy class $[\ga_p]$ in $\Ga$.
The twisting results in a replacement of the zeta function by a so called L-function which is defined to be
$$
L(\tau,u)\df \prod_{[p]}\det\left(1-w(p)u^{l(p)}\tau(\ga_p)\right)^{-1},
$$
The results of the previous section generalize to this situation.
One only has to make clear how to interpret the statements.
One way to view a hermitian local system on a graph $X$ is to say that one attaches a hermitian vector space $V_x$ to every vertex $x$ and for each pair $x,x'$ of adjacent vertices one has a unitary operator $T_{x,x'}:V_x\tto\cong V_{x'}$, called the \e{transfer operator} or \e{parallel transport}.
One then defines operators $A$ and $B$  on the Hilbert space of square integrable sections $s$ of the system given by
$$
As(x)=\sum_{x'}w(x,x')T_{x',x}s(x'),
$$
where the sum runs over all vertices $x'$ adjacent to $x$.
For any path $p=(x_0,\dots,x_n)$ one gets a unitary map $T_p:V_{x_0}\to V_{x_n}$ by composing the local transfer operators. Denote by $p^{-1}$ the path in the opposite direction, so $p^{-1}=(x_n,x_{n-1},\dots,x_0)$.
One sets
$$
A_ms(x)=\sum_pw(p)T_{p^{-1}}s(x_p).
$$
Let $B_0=\Id$ and let
\begin{align*}
B_{2n}s(x)&=\sum_{x'} W(x,x')^{n}s(x),&n\ge 1,\\
B_{2n+1}s(x)&=\sum_{x'}W(x,x')^{n}w(x,x')T_{x',x}s(x'),&n\ge 0,
\end{align*}
where the sums run over all neighbors $x'$ of $x$.
Finally let $B(u)=\sum_{j=0}^\infty u^jB_j$

The hermitian local system also gives a hermitian vector space  $V_e$ for each edge $e$ isomorphism $\tilde\tau_e:V_e\tto\cong V_{\tau(e)}$ and $\tilde o_e:V_{e}\tto\cong V_{o(e)}$, as well as $V_e\tto\cong V_{e^{-1}}$, the latter being compatible in the obvious way with the former.
Let $C_0$ denote the $\ell^2$-sum of all $V_x$ for $x\in VX$ and let $C_1$ denote the $\ell^2$-sum of all $V_e$ with $e\in\OE X$.  Let $\sigma:C_0\to C_1$ be defined by
$$
\sigma(v_x)=\sum_{e:o(e)=x}w(e)\tilde o^{-1}(v_x),\qquad v_x\in V_x.
$$
Further let $J:C_1\to C_1$ be the weighted flip, i.e., 
$$
J(v_e)=w(e^{-1})v_{e^{-1}},\qquad v_e\in V_e,
$$
where $e^{-1}$ is the reverse of the oriented edge $e$ and $v_{e^{-1}}$ is the vector naturally identified with $v_e$.
For $n\ge 1$ one has
$$
B_n=\tau J^{n-1}\sigma
$$
and so
$$
B(u)=1+u\tau(1-uJ)^{-1}\sigma.
$$

\begin{theorem}
The product $L(\tau,u)$ converges for $|u|$ small enough to a meromorphic function such that the reciprocal $L(\tau,u)^{-1}$ is entire.
One has
$$
Z(u)^{-1}=\det\(1-u\tau(1+uJ)^{-1}\sigma\)\,\prod_{e\in EX} \(1-u^2W(e)\)^{\dim V},
$$
\end{theorem}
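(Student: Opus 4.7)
The strategy is to re-run the derivation of Section~\ref{sec2} with the twisted operators $A_m$, $B_j$ that carry parallel transport, and read off the two factors on the right-hand side. The $L$-function plays the role of $Z$ throughout.

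First I would take the logarithmic derivative of the Euler product defining $L(\tau,u)$. Expanding $-\log\det(1-X)=\sum_{n\ge 1}\tr X^n/n$ and then carrying out the manipulations of Lemma~\ref{lem2.1} (from prime cycles to cycles to regular closed paths without tail) yields $u\frac{L'}{L}(\tau,u)=\sum_{m\ge 1}\tilde N_m u^m$, where $\tilde N_m=\sum_p w(p)\tr T_p$ ranges over regular closed paths of length $m$ without tail and $T_p$ denotes the holonomy around $p$.

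Next I would verify that Lemmas~\ref{lem2.2}--\ref{lem2.9} survive the twist. The trace identity $\tr A_m=\sum_p w(p)\tr T_p$ over regular closed paths is immediate from the definition of the twisted $A_m$, and matches $\tilde N_m$ up to the tail subtractions of Lemma~\ref{lem2.4}. The identity $A(u)B(-u)=1$ of Lemma~\ref{lem2.2} follows from the same backtracking-cancellation argument, now using $T_{x',x}T_{x,x'}=\Id_{V_x}$ (i.e.\ unitarity of the transfer operators, a consequence of the hermitean hypothesis) to kill the holonomy across the collapsed edge pair. Lemma~\ref{lem2.3} is unchanged because $B_{2n}$ acts on each $V_x$ as the scalar $\sum_{x'\sim x}W(x,x')^n$. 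The combinatorial counts of Lemmas~\ref{lem2.4} and~\ref{lem2.6} and the formal manipulation preceding Lemma~\ref{lem2.9} are algebraic identities among traces of products of $A$'s and $B$'s, and go through verbatim, giving
\[
\frac{L'}{L}(\tau,u)=\tr\Bigl[\tfrac{1}{u}(B_\ev(u)-1)+A'(u)B(-u)\Bigr].
\]

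To finish, I would integrate from $0$ to $u$ and exponentiate, exactly as in the proof of Theorem~\ref{thm2.10}. Using $A(0)=1$ and $A(u)B(-u)=1$, the term $\tr[A'(u)B(-u)]=\frac{d}{du}\log\det A(u)$ integrates to $-\log\det B(-u)$, producing the factor $\det B(-u)=\det(1-u\tau(1+uJ)^{-1}\sigma)$ upon inversion; the Bass-style factorization $B(u)=1+u\tau(1-uJ)^{-1}\sigma$ persists in the twisted setting because the maps $\tilde o_e$ and $\tilde\tau_e$ are already built into $\sigma$ and $\tau$. For the first term, the trace over each fiber $V_x$ of the scalar operator $B_\ev(u)-1$ contributes an extra factor $\dim V_x=\dim V$ compared to the untwisted calculation, so its integral produces $\log\prod_e(1-u^2W(e))^{-\dim V}$ and hence the claimed exponent after inversion.

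The main obstacle is purely bookkeeping: at each collapsing-backtracking step of the (twisted) Lemmas~\ref{lem2.2} and~\ref{lem2.4}, one must verify that the transfer operators along the two collapsed edges multiply to the identity on the relevant fiber. This is the one place where the hermitean assumption is indispensable; everywhere else the proof is a mechanical transcription of the argument of Theorem~\ref{thm2.10}, with a uniform $\dim V$ factor appearing in the trace of any operator that is scalar on each $V_x$. Convergence of the Euler product for small $|u|$ is immediate from the bound $|\tr\tau(\ga_p)^n|\le\dim V$ combined with the trace-class estimate of Theorem~\ref{thm1.1}.
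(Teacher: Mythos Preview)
Your proposal is correct and takes exactly the approach the paper itself takes: the paper's entire proof reads ``The proof of Theorem~\ref{thm2.10} can be applied,'' and you have spelled out in detail how each lemma of Section~\ref{sec2} transfers to the twisted setting. One small clarification: the identity $T_{x',x}T_{x,x'}=\Id_{V_x}$ that you invoke in the backtracking cancellation is not unitarity but the defining compatibility of parallel transport with path reversal (built into any local system); the hermitean hypothesis is used only to furnish the Hilbert-space structure on $C_0$ and $C_1$ needed for the trace-class and Fredholm-determinant machinery.
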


\begin{proof}
The proof of Theorem \ref{thm2.10} can be applied.
\end{proof}

\section{Partial backtracking}\label{partbacktrack}

A typical case of a weighted graph as above arises in the theory of reductive groups over local fields and their Bruhat-Tits buildings, see \cites{Serre, Lub1,Lub2}.
So let $k$ be a nonarchimedean local field and consider the group $G=\SL_2(k)$.
If $\mathrm{char}(k)>0$ then there can be a lattice $\Ga\subset G$ which is not uniform.
This means that $\Ga$ is a discrete subgroup of finite covolume $\mathrm{vol}(\Ga\bs G)<\infty$.
The Bruhat-Tits building $\CB$ is a tree which is acted upon by $G$. The stabilizer of an edge is a compact open subgroup $H$ and the vertices decompose into finitely many $G$-orbits.
We are interested in the quotient graph $X=\Ga\bs\CB$.
The fact that $\Ga$ has finite covolume results in
$$
\sum_{e\in VE}\frac 1{|\Ga_e|}<\infty,
$$
where $|\Ga_e|$ is the finite number of elements of the $\Ga$-stabilizer of any preimage $\tilde e$ in $\CB$ of $e$.
So in this case there is a natural weight function $w(e)=1/|\Ga_e|$.
However, there's more.

In the case when $X$ is finite and $\Ga$ its fundamental group, then each regular cycle defines a unique free homotopy class in $[S^1:X]$. The latter set can be identified with the conjugacy classes $[\ga]$ in the group $\Ga$.
This results in a bijection between the set of regular cycles and the set of non-trivial conjugacy classes in $\Ga$ which can be understood as follows.
Let $c$ be a regular cycle. Its preimage in the universal covering $\tilde X\to X$ consists of a union of infinite regular paths, which are all conjugate to each other under $\Ga$.
Pick one of those $p$, then $p$ maps surjectively onto $c$, so, as $c$ is finite, there exists an element $\ga\in\Ga$ which maps $p$ to itself and $c\cong p/\sp\ga$.
The bijection maps $c$ to the conjugacy class $[\ga]$ of $\ga$.

Now back to the case of the quotient of a Bruhat-Tits building by a lattice $\Ga$.
In this case, we consider the projection $\CB\to X=\Ga\bs\CB$. It happens that an infinite regular path $p$ in $\CB$ is mapped to itself by a member $\ga\in\Ga$ which at the same time fixes a point $x_0$ on $p$.
Then $x_0$ is either a vertex or the mid-point of an edge. In the latter case, we get a loop on $X$ which can be removed without changing the zeta function by introducing new vertices of weight one. 
More interesting things happen when $x_0$ is a vertex. Then $p$ is mapped to a path with backtracking, which is to say, that in order to capture all quotients of infinite paths  in $\CB$, we have to allow partial backtracking.
This means that we have to sign out a set $\CE$ of oriented edges at which to allow backtracking.
A path $p=(x_0,\dots,x_n)$ in $X$ is called \e{$\CE$-regular}, if, whenever $x_{j-1}=x_{j+1}$, then the edge $(x_{j-1},x_j)$ belongs to $\CE$. 
By an \e{$\CE$-cycle} we mean an equivalence class of closed paths consisting of $\CE$-regular paths only.
Then we form the zeta function
$$
Z_{\CE}(u)\df \prod_{p}\left(1-w(p)u^{l(p)}\right)^{-1},
$$
where the product runs over all prime $\CE$-cycles.
We first form the operator $T_\CE$ on the Hilbert space $\ell^2(\OE)$, given by 
$$
T_\CE e\df \sum_{e'} w(e')e',
$$
where this time the sum runs over all oriented edges $e'$ such that $o(e')=t(e)$, and $e'\ne e^{-1}$ unless $e\in\CE$.

\begin{theorem}\label{thm5.1}
The operator $T_{\CE}$ is of trace class.
The infinite product $Z_{\CE}(u)$ converges for $|u|$ sufficiently small. The limit function extends to a meromorphic function on $\C$ without zeros, more precisely, the function $Z_{\CE}(u)$ is entire and satisfies
$$
Z_{\CE}(u)^{-1}=\det(1-uT_{\CE}).
$$
\end{theorem}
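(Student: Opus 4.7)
The strategy is to run the proof of Theorem \ref{thm1.1} line by line, with $T_\CE$ in place of $T$ and $\CE$-cycles in place of cycles, and to verify that the backtracking rule change in the definition of $T_\CE$ translates exactly into the corresponding change in the definition of $\CE$-regular paths.

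First, I would argue that $T_\CE$ is trace class by the same estimate used for $T$. With $M$ a bound on the valency,
$$
\sum_{e\in\OE(X)}\|T_\CE e\| \;\le\; \sum_{e}\sum_{e':\,o(e')=t(e)} w(e') \;\le\; M\,w(X) \;<\;\infty.
$$
The modified rule (dropping the constraint $e'\ne e^{-1}$ when $e\in\CE$) only \emph{adds} at most one summand $w(e^{-1})e^{-1}$ to $T_\CE e$ for each $e\in\CE$, so the bound is unaffected and $T_\CE$ is trace class.

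Next, and this is the only step that really interacts with the new definition, I would identify
$$
\tr T_\CE^n \;=\; \sum_{l(c)=n} l(c_0)\,w(c),
$$
where the sum runs over $\CE$-cycles of length $n$ with underlying prime $\CE$-cycle $c_0$. Expanding the trace in the orthonormal basis of oriented edges produces a sum over closed edge sequences $e_0,e_1,\dots,e_{n-1},e_n=e_0$ with $o(e_{i+1})=t(e_i)$ and $e_{i+1}\ne e_i^{-1}$ unless $e_i\in\CE$. Writing $e_i=(x_i,x_{i+1})$, the vertex data is a closed path $(x_0,\dots,x_n=x_0)$, and the allowed transition rule becomes $x_{i+2}\ne x_i$ unless the middle edge $(x_i,x_{i+1})$ lies in $\CE$; running $i$ from $0$ to $n-1$, with the wrap-around giving $x_{n+1}=x_1$, this is exactly the cyclic $\CE$-regularity condition characterising representatives of $\CE$-cycles. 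The factor $l(c_0)$ records the size of the cyclic-shift orbit of such a representative within a given cycle.

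Given this trace identity, the exponential manipulation at the end of the proof of Theorem \ref{thm1.1} carries over verbatim:
$$
\det(1-uT_\CE)=\exp\!\left(-\sum_{n=1}^\infty\frac{u^n}{n}\tr T_\CE^n\right)=\prod_{c_0}\bigl(1-w(c_0)u^{l(c_0)}\bigr)=Z_\CE(u)^{-1},
$$
with convergence of the Euler product following a fortiori. The main point requiring a bit of care is the bookkeeping at the wrap-around, to confirm that the trace counts exactly $\CE$-cycles and not merely closed $\CE$-regular paths (possibly with $\CE$-tails); once this correspondence is established, there is nothing left to prove.
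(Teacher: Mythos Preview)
Your proposal is correct and is precisely the approach the paper intends: its proof of Theorem \ref{thm5.1} consists of the single sentence ``Same as the proof of Theorem \ref{thm1.1}.'' Your added care in checking that the modified transition rule for $T_\CE$ matches the cyclic $\CE$-regularity condition defining $\CE$-cycles (including at the wrap-around) is exactly the verification one needs, and the rest is indeed verbatim from the earlier argument.
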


\begin{proof}
Same as the proof of Theorem \ref{thm1.1}.
\end{proof}

For $m\in\N$ we set
$$
A_{\CE,m}(x)=\sum_{l(p)=m}w(p)x_p,
$$
where the sum extends over all $\CE$-regular paths $p$ of length $m$, starting in $x$, where $x_p$ denotes the endpoint of the path $p$.
Finally we define $A_{\CE,0}$ to be the Identity operator.

Set $B_{\CE,0}=\Id$ and let
\begin{align*}
B_{\CE,1}x&=\sum_{x'}w(x,x')x'\\
B_{\CE,2}x&=\sum_{\substack{x'\\ x\to x'\notin\CE}} W(x,x')x\\
B_{\CE,2n+1}x&=\sum_{\substack{x'\\ x\to x'\notin\CE\\ x'\to x\notin\CE}}W(x,x')^{n}w(x,x')x',&n\ge 1,\\
B_{\CE,2n}x&=\sum_{\substack{x'\\ x\to x'\notin\CE\\ x'\to x\notin\CE}} W(x,x')^{n}x,&n\ge 2,
\end{align*}
where the sums run over all neighbors $x'$ of $x$, leaving out those for which $(x,x')$ or $(x',x)$ belongs to $\CE$ as indicated.
Finally, let $q_\CE(x)+1$ denote the number of neighbors $x'$ of $x$ with $(x,x')\notin\CE$ and let $Q_\CE x=q_\CE(x)x$.

We also define $J_\CE$ on $C_1=\ell^2(\OE X)$ by
$$
J_\CE(e)=\begin{cases} w(e^{-1})e^{-1}& e\notin\CE\text{ and }e^{-1}\notin\CE,\\
0&\text{otherwise.}\end{cases}
$$
Then we have
$$
B_{\CE,m}=\tau J_\CE^{m-1}\sigma
$$
for $m\ge 3$, so that
$$
B_\CE(u)=1+uB_{\CE,1}+u^2B_{\CE,s}+u^3\tau J^2(1-uJ)^{-1}\sigma.
$$

\begin{lemma}
We have
$$
A_\CE(u)B_\CE(-u)=1,
$$
where $A_\CE(u)=\sum_{j=0}^\infty u^jA_{\CE,j}$ and $B_\CE(u)=\sum_{j=0}^\infty u^jB_{\CE,j}$.
\end{lemma}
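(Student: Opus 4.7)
The plan is to imitate the combinatorial argument behind Lemma \ref{lem2.2}. The identity $A_\CE(u)B_\CE(-u)=1$ amounts to showing, for each $m\ge 1$, that $\sum_{j=0}^m(-1)^j A_{\CE,m-j}B_{\CE,j}=0$ as operators on $\ell^2(VX)$.

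Fix $m\ge 1$. For $0\le k\le m-1$ introduce an auxiliary operator $P_k$ sending $x$ to the weighted sum of endpoints of all length-$m$ paths $p=(x_0,\dots,x_m)$ starting at $x$ which, at each of the positions $j=1,\dots,k$, perform a \emph{forbidden backtrack}, i.e.\ $x_{j-1}=x_{j+1}$ and $(x_{j-1},x_j)\notin\CE$, and which are $\CE$-regular at all subsequent positions $j=k+1,\dots,m-1$. By construction $P_0=A_{\CE,m}$, and inspecting the definition of $B_{\CE,m}$ gives $P_{m-1}=B_{\CE,m}$: the rigid zig-zag $(x,x_1,x,x_1,\dots)$ of length $m$ is the only family in which every position is a backtrack, and the $\CE$-conditions built into the definition of $B_{\CE,m}$ are exactly those needed to make each such backtrack forbidden. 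The core claim is
$$
A_{\CE,m-k}B_{\CE,k}=P_{k-1}+P_k,\qquad 1\le k\le m-1.
$$
Granting this, the alternating sum telescopes exactly as in Lemma \ref{lem2.2}: every $P_j$ with $1\le j\le m-2$ appears twice with opposite signs, the $-P_0$ coming from $k=1$ cancels the outer term $A_{\CE,m}B_{\CE,0}=P_0$, and the $(-1)^{m-1}P_{m-1}$ from $k=m-1$ cancels $(-1)^m B_{\CE,m}=(-1)^m P_{m-1}$ via $(-1)^{m-1}+(-1)^m=0$.

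To prove the claim, expand $A_{\CE,m-k}B_{\CE,k}x$ as a sum over concatenations of the rigid $k$-segment produced by $B_{\CE,k}$ with a $\CE$-regular tail of length $m-k$ produced by $A_{\CE,m-k}$. Positions $1,\dots,k-1$ are automatically forbidden backtracks by the $\CE$-restrictions inside $B_{\CE,k}$; positions $k+1,\dots,m-1$ are $\CE$-regular by the tail; only the junction position $k$ is a priori unconstrained. For $k\ge 3$ the definition of $B_{\CE,k}$ forces both $(x,x_1)$ and $(x_1,x)$ out of $\CE$, so any backtracking at position $k$ is automatically forbidden, and the exhaustive dichotomy ``no backtracking at $k$'' vs.\ ``forbidden backtracking at $k$'' splits $A_{\CE,m-k}B_{\CE,k}$ into $P_{k-1}+P_k$. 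The case $k=1$ is a direct replay of the corresponding step of Lemma \ref{lem2.2}. The case $k=2$ is the one genuine subtlety I expect: $B_{\CE,2}$ imposes only $(x,x_1)\notin\CE$, so at position $2$ an \emph{allowed} backtrack is also possible; however, ``no backtracking at $2$'' together with ``allowed backtracking at $2$'' is precisely the $\CE$-regularity condition at position $2$ captured by $P_1$, while ``forbidden backtracking at $2$'' is captured by $P_2$, so $A_{\CE,m-2}B_{\CE,2}=P_1+P_2$ still holds. This asymmetric treatment at $k=2$ is the only place where the argument departs from the unweighted case; once it is handled, the rest of the proof is purely formal.
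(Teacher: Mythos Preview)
Your proof is correct and follows exactly the approach the paper intends: the paper's own proof consists of the single sentence ``The proof is a straightforward generalization of the proof of Lemma~\ref{lem2.2}.'' You have carried out that generalization in detail, and your introduction of the interpolating operators $P_k$ makes the telescoping argument cleaner than the informal version given for Lemma~\ref{lem2.2}. Your identification of the $k=2$ case as the one place requiring genuine care---because $B_{\CE,2}$ imposes only the one-sided condition $(x,x_1)\notin\CE$---is exactly right, and your resolution (that ``no backtracking'' together with ``allowed backtracking'' at position $2$ reconstitutes $\CE$-regularity there) is the key observation the paper suppresses.
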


\begin{proof}
The proof is a straightforward generalization of the proof of Lemma \ref{lem2.2}.
\end{proof}

Let $B_{\CE,\ev}(u)=\sum_{\stack{j\ge 0}{\text{even}}}u^jB_{\CE,j}$ and $B_{\CE,\odd}(u)=B_\CE(u)-B_{\CE,\ev}(u)$.
A calculation shows that
\begin{align*}
B_{\CE,\ev}(u)x&=x+\sum_{\substack{x\to x'\notin\CE\\ x'\to x\notin\CE}}\(\frac{u^2W(x,x')}{1-u^2 W(x,x')}\)x\\
&\ \ \ -u^2\sum_{\substack{x\to x'\notin\CE\\ x'\to x\in\CE}}W(x,x')x\\
B_{\CE,\odd}(u)&=\sum_{\substack{x\to x'\notin\CE\\ x'\to x\notin\CE}}\(\frac{uw(x,x')}{1-u^2W(x,x')}\) x'
-u\sum_{\substack{x\to x'\in\CE\\ \text{or}\\ x'\to x\in\CE}}w(x,x')x'.
\end{align*}

As in the beginning of Section \ref{sec2}, for $m\in\N$ we define
$$
N_{\CE,m}=\sum_{\substack{l(p)=m\\ \CE\text{-regular, closed}\\ \text{at most }\CE\text{-regular tail}}}w(p).
$$
Here we say that a closed path $p=(x_0,\dots,x_n)$ has \e{at most an $\CE$-regular tail}, if
$$
x_0=x_{n-1}\quad\Rightarrow\quad (x_1,x_0)\in\CE.
$$
Along the lines of Lemma \ref{lem2.1} we get
$$
u\frac{Z_{\CE}'}{Z_{\CE}}(u)=\sum_{m=1}^\infty N_{\CE,m}u^m.
$$

\begin{lemma}
With
$$
C_{\CE,m,n}x=\sum_{\substack{l(p)=m\\ \CE\text{-regular}\\ x_0=x}}W(x_0,x_1)^{n}w(p)x_m
$$
we have $n_{\CE,0}=N_{\CE,1}=0$ and for $m\ge 2$,
$$
N_{\CE,m}=\tr A_{\CE,m}-\sum_{j=1}^{\left[\frac{m-1}2\right]}\tr A_{\CE,m-2j}B_{\CE,2j}+2\sum_{j=1}^{\left[\frac{m-1}2\right]}\tr C_{\CE,m-2j,j},
$$
which with $C_\CE(u)=\sum_{m,n=1}^\infty u^{m+2n}C_{m,n}$ can be written as
$$
u\frac{Z_\CE'}{Z_\CE}(u)=\tr\left[ A_\CE(u)-1)(2-B_{\CE,\ev}(u))+2C_\CE(u)\right].
$$
We end up with
$$
\frac{Z_\CE'}{Z_\CE}(u)=\tr\left[\frac1u[B_{\CE,\ev}(u)-1]+A_\CE'(u)(B_\CE(-u)\right].
$$
\end{lemma}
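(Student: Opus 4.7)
The plan is to carry over the entire derivation of Section \ref{sec2} to the $\CE$-backtracking setting step by step. The operators $A_{\CE,m}$, $B_{\CE,j}$, and $C_{\CE,m,n}$ are defined precisely so that every counting identity of Section \ref{sec2} has a literal translation, and the three claims of the lemma mirror Lemma \ref{lem2.4}, the Proposition, and Lemma \ref{lem2.9} respectively. I will prove the three parts in order; each reduces by a bookkeeping argument to its Section \ref{sec2} analog.

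For the first (combinatorial) identity, I start from the observation that $\tr A_{\CE,m}$ counts all $\CE$-regular closed paths of length $m$, while $N_{\CE,m}$ counts those having at most an $\CE$-regular tail. Thus $\tr A_{\CE,m}-N_{\CE,m}$ sums over $\CE$-regular closed paths $p=(x_0,\dots,x_n)$ with $x_0=x_{n-1}$ and $(x_1,x_0)\notin\CE$. I collapse this tail: the product $\tr A_{\CE,m-2}B_{\CE,2}$ counts an $\CE$-regular closed path of length $m-2$ followed by a ``there and back'' step to a neighbor $x'$ with $x\to x'\notin\CE$, which matches the constraint above exactly by the definition of $B_{\CE,2}$. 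The overcount consists of paths whose tail continues one step further, i.e.\ where the second-to-last edge also backtracks along an edge outside $\CE$ in both orientations; by the reversibility argument of Lemma \ref{lem2.4}, this overcount splits as $2\tr C_{\CE,m-2,1}$ (two choices of tail orientation, matching the $W(x,x')^1$ weighting) minus an $\tr A_{\CE,m-4}B_{\CE,4}$ correction for even longer tails. Iterating this collapse yields the alternating sum. The cases $m=0,1$ vanish trivially.

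For the second claim, I multiply the first identity by $u^m$ and sum over $m\ge 1$, using the analog of Lemma \ref{lem2.1} (which is stated as
$u\,Z_\CE'/Z_\CE(u)=\sum_{m\ge 1}N_{\CE,m}u^m$ in the preceding display). The convolution structure of the alternating sums assembles into $\tr[(A_\CE(u)-1)(2-B_{\CE,\ev}(u))+2C_\CE(u)]$, where the constant $2$ comes from combining the leading $2\tr A_{\CE,m}u^m$ with the $B_{\CE,0}=\Id$ term in $B_{\CE,\ev}(u)$. For the third claim, I need the $\CE$-analog of Lemma \ref{lem2.6}, $\tr C_{\CE,m,n}=\sum_{j=0}^{2n}(-1)^j\tr A_{\CE,m+j}B_{\CE,2n-j}$, which is obtained by the same recursion as in Section \ref{sec2} (the tail-collapse identity $\tr C_{\CE,m,n}=\tr A_{\CE,m-1}B_{\CE,2n+1}-\tr A_{\CE,m-2}B_{\CE,2n+2}+\tr C_{\CE,m-2,n+1}$, iterated). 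Summing this with the weights $u^{m+2n}$ gives the analog of the lemma preceding Lemma \ref{lem2.9}, namely
$$
\tr C_\CE(u)=\tr\[\tfrac{u}{2}A_\CE'(u)B_\CE(-u)+\tfrac12 A_{\CE,\odd}(u)B_\CE(u)-A_\CE(u)+1\].
$$
Substituting this into the second claim and simplifying as in the proof of Lemma \ref{lem2.9}, using $A_\CE(u)B_\CE(-u)=1$ from the previous lemma and the splittings $A_\CE=A_{\CE,\ev}+A_{\CE,\odd}$, $B_\CE=B_{\CE,\ev}+B_{\CE,\odd}$, collapses the expression to the advertised $\tr[\tfrac{1}{u}(B_{\CE,\ev}(u)-1)+A_\CE'(u)B_\CE(-u)]$.

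The main obstacle is the tail-collapse bookkeeping in the first step: the definition of $B_{\CE,j}$ carefully distinguishes $j=2$ (only $x\to x'\notin\CE$ required) from $j\ge 4$ (both orientations outside $\CE$), reflecting the fact that the \emph{first} tail step in a ``closed with at most $\CE$-regular tail'' path must escape $\CE$ only in the outgoing direction, whereas all subsequent ``collapsed'' tail-layers traverse edges outside $\CE$ in both directions. One must verify that the alternating inclusion-exclusion matches these constraints at each iteration, and that the factor of $2$ in front of the $C_{\CE,m-2j,j}$ terms is correct because each such contribution arises from one of two path-orientations of the tail. Once this is checked, the rest is a mechanical transliteration of Section \ref{sec2}.
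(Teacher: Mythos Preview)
Your proposal is correct and follows exactly the route the paper intends: its own proof is the single sentence ``The proof is similar to Lemmas \ref{lem2.4} to \ref{lem2.9},'' and you have spelled out precisely that transliteration, including the key point that the asymmetric definitions of $B_{\CE,2}$ versus $B_{\CE,j}$ for $j\ge 4$ are what make the tail-collapse inclusion--exclusion match up.
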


The proof is similar to Lemmas \ref{lem2.4} to \ref{lem2.9}.

\begin{definition}
The set $\CE$ of exceptional edges contains oriented edges.
Let $\ol\CE$ denote the image of $\CE$ in the set $EX$ of non-oriented edges.
Further let
$$
\al=\frac12\sum_{\substack{x\to x'\notin\CE\\ x'\to x\in\CE}}w(x,x')^2.
$$
\end{definition}

\begin{theorem}
The entire function $Z_\CE(u)^{-1}$ can be written as
\begin{align*}
Z_\CE(u)^{-1}&=\det\(B_\CE(-u)\)\,\prod_{e\notin\ol\CE} \(1-u^2W(e)\)\, e^{-\al u^2},
\end{align*}
where the first factor can also  be expressed as
$$
\det\(B_\CE(-u)\)=\det\(1-uB_{\CE,1}+u^2B_{\CE,s}-u^3\tau J^2(1+uJ)^{-1}\sigma\).
$$

\end{theorem}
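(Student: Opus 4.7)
The plan is to mimic the proof of Theorem \ref{thm2.10}, starting from the logarithmic derivative formula
$$
\frac{Z_\CE'}{Z_\CE}(u)=\tr\left[\frac1u(B_{\CE,\ev}(u)-1)+A_\CE'(u)B_\CE(-u)\right]
$$
recorded in the preceding lemma and integrating back up to $\log Z_\CE(u)^{-1}$. The second trace term is handled by invoking the identity $A_\CE(u)B_\CE(-u)=1$: it gives $B_\CE(-u)=A_\CE(u)^{-1}$, hence
$$
\tr\bigl[A_\CE'(u)B_\CE(-u)\bigr]=\tr\bigl[A_\CE'(u)A_\CE(u)^{-1}\bigr]=\frac{d}{du}\log\det A_\CE(u)=-\frac{d}{du}\log\det B_\CE(-u),
$$
which integrates and exponentiates to produce the factor $\det(B_\CE(-u))$. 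This is the direct analogue of the corresponding step in Theorem \ref{thm2.10}.

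Next I would evaluate $\tr[u^{-1}(B_{\CE,\ev}(u)-1)]$ in the standard orthonormal vertex basis of $\ell^2(VX)$, plugging in the explicit formula for $B_{\CE,\ev}(u)x$ from just before the lemma. The diagonal entries split into two types of sums over ordered neighboring pairs $(x,x')$. Pairs with neither orientation in $\CE$ assemble into
$$
\frac2u\sum_{e\notin\ol\CE}\frac{u^2W(e)}{1-u^2W(e)}=\sum_{e\notin\ol\CE}\bigl(\log(1-u^2W(e))\bigr)',
$$
the factor $2$ coming from the two orientations of each non-oriented edge, exactly as in the proof of Theorem \ref{thm2.10}. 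Pairs with $x\to x'\notin\CE$ but $x'\to x\in\CE$ contribute a term linear in $u$ with no denominator; integrated from $0$ to $u$, this produces the $\al u^2$ correction, with $\al$ matching the definition in the statement.

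Adding the two pieces and exponentiating gives the claimed product formula
$$
Z_\CE(u)^{-1}=\det(B_\CE(-u))\prod_{e\notin\ol\CE}\bigl(1-u^2W(e)\bigr)e^{-\al u^2}.
$$
Convergence of the edge product to an entire function follows from $\sum_{e\in EX}W(e)\le w(X)^2<\infty$, exactly as in Theorem \ref{thm2.10}. For the alternative expression of $\det(B_\CE(-u))$, I would simply substitute $u\mapsto -u$ in the factorization $B_\CE(u)=1+uB_{\CE,1}+u^2B_{\CE,2}+u^3\tau J_\CE^2(1-uJ_\CE)^{-1}\sigma$ recorded just before the lemma, noting that each of $B_{\CE,1}$ and $B_{\CE,2}$ is of trace class by the bounded-valency and finite-total-weight assumptions.

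The main obstacle is the bookkeeping for the second type of contribution in the trace of $u^{-1}(B_{\CE,\ev}(u)-1)$: one must track correctly that the ``asymmetric'' neighbors (where one orientation lies in $\CE$ and the other does not) produce a genuinely new term absent in the unrestricted case, and that this term is a polynomial (in fact quadratic after integration), so that it exponentiates to the elementary factor $e^{-\al u^2}$ rather than to another infinite product. Once the sign and combinatorial factor of $\al$ are pinned down, the remainder of the argument is a mechanical transcription of the proof of Theorem \ref{thm2.10}.
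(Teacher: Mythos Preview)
Your proposal follows exactly the paper's approach: compute $\tr\bigl[\tfrac1u(B_{\CE,\ev}(u)-1)\bigr]$ in the vertex basis, recognize the symmetric-neighbor contribution as the logarithmic derivative of $\prod_{e\notin\ol\CE}(1-u^2W(e))$ and the asymmetric-neighbor contribution as the linear term producing $e^{-\al u^2}$, then integrate, exponentiate, and fix the constant by evaluating at $u=0$. The only slip is a sign in your displayed identity $\tfrac2u\sum\frac{u^2W(e)}{1-u^2W(e)}=\sum(\log(1-u^2W(e)))'$ (the right-hand side is the negative of the left), but this is harmless bookkeeping and the argument goes through as written.
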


\begin{proof}
The trace of $\frac1u[B_{\CE,\ev}(u)-1]$ equals the logarithmic derivative of the infinite product minus $2\al u$.
From this we get the claim up to a constant nonzero factor.
Setting $u=0$ shows that the factor is one.
\end{proof}

\begin{bibdiv} \begin{biblist}

\bib{Angel}{article}{
   author={Angel, Omer},
   author={Friedman, Joel},
   author={Hoory, Shlomo},
   title={The non-backtracking spectrum of the universal cover of a graph},
   journal={Transactions AMS},
   date={2014},
}

\bib{Bass}{article}{
   author={Bass, Hyman},
   title={The Ihara-Selberg zeta function of a tree lattice},
   journal={Internat. J. Math.},
   volume={3},
   date={1992},
   number={6},
   pages={717--797},
   issn={0129-167X},
}

\bib{BassLub}{book}{
   author={Bass, Hyman},
   author={Lubotzky, Alexander},
   title={Tree lattices},
   series={Progress in Mathematics},
   volume={176},
   note={With appendices by Bass, L. Carbone, Lubotzky, G. Rosenberg and J.
   Tits},
   publisher={Birkh\"auser Boston Inc.},
   place={Boston, MA},
   date={2001},
   pages={xiv+233},
   isbn={0-8176-4120-3},
}

\bib{Chinta}{article}{
   author={Chinta, G.},
   author={Jorgenson, J.},
   author={Karlsson, A.},
   title={Heat kernels on regular graphs and generalized Ihara zeta functions},
   journal={Monatshefte für Mathematik},
   date={2014},
}

\bib{Clair1}{article}{
   author={Clair, Bryan},
   author={Mokhtari-Sharghi, Shahriar},
   title={Zeta functions of discrete groups acting on trees},
   journal={J. Algebra},
   volume={237},
   date={2001},
   number={2},
   pages={591--620},
   issn={0021-8693},
}

\bib{Clair2}{article}{
   author={Clair, Bryan},
   title={Zeta functions of graphs with $\mathbb Z$ actions},
   journal={J. Combin. Theory Ser. B},
   volume={99},
   date={2009},
   number={1},
   pages={48--61},
   issn={0095-8956},
}

\bib{Grig}{article}{
   author={Grigorchuk, Rostislav I.},
   author={{\.Z}uk, Andrzej},
   title={The Ihara zeta function of infinite graphs, the KNS spectral
   measure and integrable maps},
   conference={
      title={Random walks and geometry},
   },
   book={
      publisher={Walter de Gruyter GmbH \& Co. KG, Berlin},
   },
   date={2004},
   pages={141--180},
}

\bib{Guido1}{article}{
   author={Guido, Daniele},
   author={Isola, Tommaso},
   author={Lapidus, Michel L.},
   title={Ihara zeta functions for periodic simple graphs},
   conference={
      title={C*-algebras and elliptic theory II},
   },
   book={
      series={Trends Math.},
      publisher={Birkh\"auser},
      place={Basel},
   },
   date={2008},
   pages={103--121},
}

\bib{Guido2}{article}{
   author={Guido, Daniele},
   author={Isola, Tommaso},
   author={Lapidus, Michel L.},
   title={Ihara's zeta function for periodic graphs and its approximation in
   the amenable case},
   journal={J. Funct. Anal.},
   volume={255},
   date={2008},
   number={6},
   pages={1339--1361},
   issn={0022-1236},
}

\bib{Hash0}{article}{
   author={Hashimoto, Ki-ichiro},
   author={Hori, Akira},
   title={Selberg-Ihara's zeta function for $p$-adic discrete groups},
   conference={
      title={Automorphic forms and geometry of arithmetic varieties},
   },
   book={
      series={Adv. Stud. Pure Math.},
      volume={15},
      publisher={Academic Press},
      place={Boston, MA},
   },
   date={1989},
   pages={171--210},
}

\bib{Hash1}{article}{
   author={Hashimoto, Ki-ichiro},
   title={Zeta functions of finite graphs and representations of $p$-adic
   groups},
   conference={
      title={Automorphic forms and geometry of arithmetic varieties},
   },
   book={
      series={Adv. Stud. Pure Math.},
      volume={15},
      publisher={Academic Press},
      place={Boston, MA},
   },
   date={1989},
   pages={211--280},
}

\bib{Hash2}{article}{
   author={Hashimoto, Ki-ichiro},
   title={On zeta and $L$-functions of finite graphs},
   journal={Internat. J. Math.},
   volume={1},
   date={1990},
   number={4},
   pages={381--396},
   issn={0129-167X},
}

\bib{Hash3}{article}{
   author={Hashimoto, Ki-ichiro},
   title={Artin type $L$-functions and the density theorem for prime cycles
   on finite graphs},
   journal={Internat. J. Math.},
   volume={3},
   date={1992},
   number={6},
   pages={809--826},
   issn={0129-167X},
}

\bib{Hash4}{article}{
   author={Hashimoto, Ki-ichiro},
   title={Artin $L$-functions of finite graphs and their applications},
   language={Japanese},
   note={Algebraic combinatorics (Japanese) (Kyoto, 1992)},
   journal={S\=uri\-kaise\-kikenky\=usho K\=oky\=uroku},
   number={840},
   date={1993},
   pages={70--81},
}

\bib{Ihara1}{article}{
   author={Ihara, Yasutaka},
   title={On discrete subgroups of the two by two projective linear group
   over ${\germ p}$-adic fields},
   journal={J. Math. Soc. Japan},
   volume={18},
   date={1966},
   pages={219--235},
   issn={0025-5645},
}

\bib{Ihara2}{article}{
   author={Ihara, Yasutaka},
   title={Discrete subgroups of ${\rm PL}(2,\,k_{\wp })$},
   conference={
      title={Algebraic Groups and Discontinuous Subgroups (Proc. Sympos.
      Pure Math., Boulder, Colo., 1965)},
   },
   book={
      publisher={Amer. Math. Soc.},
      place={Providence, R.I.},
   },
   date={1966},
   pages={272--278},
}

\bib{Sunada}{article}{
   author={Kotani, Motoko},
   author={Sunada, Toshikazu},
   title={Zeta functions of finite graphs},
   journal={J. Math. Sci. Univ. Tokyo},
   volume={7},
   date={2000},
   number={1},
   pages={7--25},
   issn={1340-5705},
}

\bib{Lub1}{article}{
   author={Lubotzky, Alexander},
   title={Trees and discrete subgroups of Lie groups over local fields},
   journal={Bull. Amer. Math. Soc. (N.S.)},
   volume={20},
   date={1989},
   number={1},
   pages={27--30},
   issn={0273-0979},
}

\bib{Lub2}{article}{
   author={Lubotzky, Alexander},
   title={Lattices in rank one Lie groups over local fields},
   journal={Geom. Funct. Anal.},
   volume={1},
   date={1991},
   number={4},
   pages={406--431},
   issn={1016-443X},
}

\bib{Scheja}{article}{
   author={Scheja, Ortwin},
   title={On zeta functions of arithmetically defined graphs},
   journal={Finite Fields Appl.},
   volume={5},
   date={1999},
   number={3},
   pages={314--343},
   issn={1071-5797},
}

\bib{Serre}{book}{
   author={Serre, Jean-Pierre},
   title={Trees},
   series={Springer Monographs in Mathematics},
   note={Translated from the French original by John Stillwell;
   Corrected 2nd printing of the 1980 English translation},
   publisher={Springer-Verlag},
   place={Berlin},
   date={2003},
   pages={x+142},
   isbn={3-540-44237-5},
}

\bib{Simon}{book}{
   author={Simon, Barry},
   title={Trace ideals and their applications},
   series={Mathematical Surveys and Monographs},
   volume={120},
   edition={2},
   publisher={American Mathematical Society},
   place={Providence, RI},
   date={2005},
   pages={viii+150},
   isbn={0-8218-3581-5},
}

\bib{ST1}{article}{
   author={Stark, H. M.},
   author={Terras, A. A.},
   title={Zeta functions of finite graphs and coverings},
   journal={Adv. Math.},
   volume={121},
   date={1996},
   number={1},
   pages={124--165},
   issn={0001-8708},
}

\bib{ST2}{article}{
   author={Stark, H. M.},
   author={Terras, A. A.},
   title={Zeta functions of finite graphs and coverings. II},
   journal={Adv. Math.},
   volume={154},
   date={2000},
   number={1},
   pages={132--195},
   issn={0001-8708},
}

\bib{ST3}{article}{
   author={Terras, A. A.},
   author={Stark, H. M.},
   title={Zeta functions of finite graphs and coverings. III},
   journal={Adv. Math.},
   volume={208},
   date={2007},
   number={1},
   pages={467--489},
   issn={0001-8708},
}

\bib{Sun1}{article}{
   author={Sunada, Toshikazu},
   title={$L$-functions in geometry and some applications},
   conference={
      title={Curvature and topology of Riemannian manifolds},
      address={Katata},
      date={1985},
   },
   book={
      series={Lecture Notes in Math.},
      volume={1201},
      publisher={Springer},
      place={Berlin},
   },
   date={1986},
   pages={266--284},
}

\bib{Sun2}{article}{
   author={Sunada, Toshikazu},
   title={Fundamental groups and Laplacians},
   conference={
      title={Geometry and analysis on manifolds},
      address={Katata/Kyoto},
      date={1987},
   },
   book={
      series={Lecture Notes in Math.},
      volume={1339},
      publisher={Springer},
      place={Berlin},
   },
   date={1988},
   pages={248--277},
}

\end{biblist} \end{bibdiv}

{\small Mathematisches Institut\\
Auf der Morgenstelle 10\\
72076 T\"ubingen\\
Germany\\
\tt deitmar@uni-tuebingen.de}

\today

\end{document}